\newtheorem{theorem}{Theorem}[section]
\newtheorem{lemma}[theorem]{Lemma}
\newtheorem{prop}[theorem]{Proposition}
\newtheorem{cor}[theorem]{Corollary}
\numberwithin{equation}{section}
\newcommand{\epi}{\twoheadrightarrow}
\newcommand{\mono}{\rightarrowtail}
\newcommand{\R}{\mathbb{R}}
\newcommand{\Z}{\mathbb{Z}}
\theoremstyle{definition}
\begin{document}
\title{Homological finiteness properties of fibre products}
\author{ Dessislava H. Kochloukova, Francismar Ferreira Lima}
\address
{State University of Campinas (UNICAMP), SP, Brazil \\
email : desi@ime.unicamp.br
\newline
\\ Technical State University of Paran\'a (UTFPR), PR, Brazil, email : francismarferreiralima@gmail.com\\} 
\email{}
\subjclass[2010]{Primary 20J05; Secondary 20F05;}
\date{}
\keywords{}
\begin{abstract} We study the homological finiteness property $FP_n$ of fibre products. A homological version of the $n$-$(n+1)$-$(n+2)$ Conjecture is suggested and solved in some cases. Though the Homological 1-2-3 Conjecture is still open we prove a homological version of the Virtual Surjection Conjecture in the case of virtual surjection on pairs.  
\end{abstract}
\maketitle
\section{Introduction}

In this paper we study homological finiteness properties $FP_n$  of the fibre $P$ of two epimorphisms of groups $f_1 : G_1 \to Q$ and $f_2 :
G_2 \to Q$. In \cite{Benno} Kuckuck  studied the homotopical finiteness property $F_n$ of the fibre $P$.
The homotopical type $F_n$ was defined by Wall in \cite{Wall}. We recall that a group $G$ is of type $F_n$ if there is $K(G,1)$-complex with finite $n$-skeleton. For $n \geq 2$ a group $G$ has a homotopical type $F_n$ if and only if it is finitely presented and has homological type $FP_n$. The latest means there is a projective resolution of the trivial $\Z G$-module $\Z$ with finitely generated projectives in all dimensions $ \leq n$, for more details and properties on the homological property $FP_n$  we refer the reader to the Bieri book \cite{Bieribook}.

By definition, for  epimorphisms of groups $f_1 : G_1 \to Q$ and $f_2 :
G_2 \to Q$, the fibre product of $f_1$ and $f_2$ is
$$P = \{(g, h) | f_1(g) = f_2(h) \} \subseteq G_1 \times G_2.$$
Alternatively we say that $P$ is the  fibre product associated to the short exact sequences 
 $Ker(f_1) \hookrightarrow G_1 \stackrel{f_1}{\twoheadrightarrow} Q$  and  $Ker(f_2) \hookrightarrow G_2 \stackrel{f_2}{\twoheadrightarrow} Q$. 
In the case when both $G_1, G_2$ are finitely presented, $Q$ is of homotopical type $F_3$  and one of $Ker (f_1)$ and $Ker(f_2)$ is finitely generated Bridson, Howie, Miller and Short showed in \cite{B-H-M-S.1} that $P$ is finitely presented. This result is called the 1-2-3 Theorem or sometimes the Asymmetric  1-2-3 Theorem. A symmetric version when $f_1 = f_2$ was proved earlier by Baumslag, Bridson, Miller and Short in \cite{B-B-M-S}.  Some results on finite presentability of twisted fibre products were established by Mart\'inez-P\'erez in \cite{Conchita} and involved the use of the Bieri-Strebel-Neumann $\Sigma$-invariant.

 In \cite{Benno} Kuckuck suggested 
 
 \medskip
 {\bf The  $n$-$(n+1)$-$(n+2)$ Conjecture} 
 {\it  Let $N_1 \to G_1 \to Q$ and $N_2 \to G_2 \to Q$ be short exact sequences, where $Q$ of type $F_{n+2}$, $G_1$ and $G_2$ are groups of type $F_{n+1}$ and  $N_1$  is of type $F_{n}$. Then the fibre product 
$P$ is of type $F_{n+1}$.}
 
 \medskip
 In this paper we discuss a homological version of this conjecture.
 
\medskip
 {\bf The Homological $n$-$(n+1)$-$(n+2)$ Conjecture} {\it  Let $N_1 \to G_1 \to Q$ and $N_2 \to G_2 \to Q$ be short exact sequences, where $Q$ is of type $FP_{n+2}$, $G_1$ and $G_2$ are groups of type $FP_{n+1}$ and  $N_1$  is of type $FP_{n}$. Then the fibre product 
$P$ is of type $FP_{n+1}$.}

\medskip
One of the main results in \cite{Benno} is the technical \cite[Prop. 4.3]{Benno} and it is proved there  by purely topological methods using stacks of complexes and the Borel construction. Our first result, Theorem A, is a homological version of \cite[Prop. 4.3]{Benno}. We prove Theorem A by purely algebraic means (spectral sequences) and observe that the original proof in \cite{Benno} cannot be translated in homological language i.e. the fact that the groups are finitely presented was essentially used in \cite{Benno}. 

\medskip
{\bf Theorem A} {\it Let $n \geq 1$ be a natural number, $A \hookrightarrow B \twoheadrightarrow C$ a short exact sequence of groups with  $A$ of type $FP_n$ and $C$ of type $FP_{n+1}$. Assume there is another short exact sequence of groups $A \hookrightarrow B_0 \twoheadrightarrow C_0$ with $B_0$ of type $FP_{n+1}$ and that there is a group homomorphism $\theta: B_0 \rightarrow B$ such that $\theta|_A = id_A$,  i.e. there is a commutative diagram of homomorphisms of groups  $$\xymatrix{A \ \ar@{^{(}->}[r] \ar[d]_{id_A} & B_0 \ar@{->>}[r]^{\pi_0} \ar[d]_{\theta} & C_0 \ar@{.>}[d]^{\nu} \\ A \ \ar@{^{(}->}[r] & B \ar@{->>}[r]^{\pi} & C}$$  Then $B$ is of type $FP_{n+1}$.}

\medskip
The homotopical version  of Theorem A, \cite[Prop. 4.3]{Benno},  was used in  \cite{Benno}  to prove several results about the $n$-$(n+1)$-$(n+2)$ Conjecture. Here we adopt the same approach and following the recipe suggested in \cite{Benno} we deduce from Theorem A several results about the Homological $n$-$(n+1)$-$(n+2)$ Conjecture.

\medskip

{\bf Theorem B} {\it The Homological $n$-$(n+1)$-$(n+2)$ Theorem holds if the second sequence splits.}

\medskip
{\bf Theorem C} {\it If the  Homological $n$-$(n+1)$-$(n+2)$ Conjecture holds whenever $G_2$ is a finitely generated free group then it holds in general.}

\medskip
The proof of the following result  uses  some properties of the homological $\Sigma$-invariants defined by Bieri and Renz in \cite{B-R}. In Section \ref{prelim} we will revise the properties of the homological $\Sigma$-invariants that will be needed later.

\medskip
{\bf Theorem D}
{\it Let $n \geq 1$ be a natural number, $N_1 \hookrightarrow G_1 \stackrel{\pi_1}{\twoheadrightarrow} Q,$ $N_2 \hookrightarrow G_2 \stackrel{\pi_2}{\twoheadrightarrow} Q$ be short exact sequences of groups, where $G_1, G_2$ are of type $FP_{n+1}$, $Q$ is virtually abelian, $N_1$ is of type $FP_k$  and $N_2$ is of type $FP_l$ for some $k,l \geq 0$ with $k + l \geq n$. Then the fiber product $P$ of $\pi_1$  and $\pi_2$ is of type $FP_{n+1}$.}

\medskip Though the general case of the Homological 1-2-3 Conjecture is still open, we solve it in the case when $Q$ is finitely presented.

\medskip
{\bf Theorem E} {\it The Homological 1-2-3 Conjecture holds if $Q$ is finitely presented.}

\medskip
Our interest in the homological finiteness properties of fibre products stems from our interest 
in the homological finiteness  type of subgroups of direct products of groups. Some results about the homotopical type $F_n$ were conjectured  in the case of subdirect products of non-abelian limit groups by Dison in \cite[section~12.5]{Dison}  and  in the case of some special subdirect products of groups of type $FP_{\infty}$ by Kochloukova in \cite{Desi1}.
Limit groups were defined by Sela and studied  by Kharlampovich and Myasnikov under the name fully residually free groups.  The class of limit groups played an important role in the solution of the Tarski problem in \cite{K-M} and \cite{Sela}.
 The interest in the study of homological and homotopical properties of subdirect products derives from the fact that every finitely generated residually free group embeds  as a subgroup of a direct product of finitely many limit groups \cite{B-M-R}.

The homotopical type of subdirect products of groups  was conjectured in \cite{Benno}, where Kuckuck stated the following form of the Virtual Surjection Theorem.

\medskip
{\bf The Virtual Surjection Conjecture} {\it Let $n \geq 2$ be a natural number, $G_1, \ldots, G_k$ be groups of homotopical type $F_n$, where $n \leq k$ and $P \subseteq G_1 \times \ldots \times G_k$ be a subgroup that virtually surjects on every $n$ factors i.e. for every $1 \leq i_1 < \ldots < i_n \leq k$ the image of $P$ under 
  the canonical projection $P \to G_{i_1} \times G_{i_2} \times \ldots \times G_{i_n}$ has finite index. Then $P$ is of type $F_n$.}

\medskip
In   \cite{B-H-M-S.1}  Bridson, Howie, Miller and Short showed that the Virtual Surjection Conjecture holds for $n = 2$ and this was deduced as a corollary of the 1-2-3 Theorem.
 This was later generalised in \cite{Benno}, where Kuckuck proved that if the  $(n-1)$-$n$-$(n+1)$ Conjecture holds when $Q$ is virtually nilpotent then the Virtual Surjection Conjecture holds in general.  In \cite{B-H-M-S.2} Bridson, Howie, Miller and Short proved that if $P$ is a finitely presented subdirect product of non-abelian limit groups $G_1, \ldots, G_k$ such that  $P \cap G_i \not= \emptyset$ for every $1 \leq i \leq k$ then $P$ virtually surjects on pairs. Later in \cite{Desi1}, Kochloukova showed   that if furthermore $P$ is of type $FP_n$ for some $n \leq k$ then  $P$  virtually surjects on every $n$ factors.
In this paper we suggest the following homological version of the Virtual Surjection Conjecture.

\medskip

{\bf The Homological Virtual Surjection Conjecture} {\it Let $n \geq 2$ be a natural number and $G_1, \ldots, G_k$ be groups of homological type $FP_n$, where $n \leq k$  and $P \subseteq G_1 \times \ldots \times G_k$ be a subgroup that virtually surjects on every $n$ factors. Then $P$ is of type $FP_n$.}

\medskip
The first part of Theorem F is  a homological version of \cite[Thm.~3.10]{Benno}. The second part of Theorem F follows from the first part, Theorem E and the fact that every virtually nilpotent group is finitely presented.

\medskip
{\bf Theorem F} {\it If the Homological $(n-1)$-$n$-$(n+1)$ Conjecture holds for $Q$ virtually nilpotent then the Homological Virtual Surjection Conjecture holds in general. In particular, the Homological Virtual Surjection Conjecture holds for $n =2$ i.e.  for groups that virtually surject on pairs.}

\medskip

Finally we note that some results on homological finiteness properties of fibre sums of Lie algebras and subdirect sums of Lie algebras  were recently established by Kochloukova and Mart\'inez-P\'erez in \cite{K-MP}. Though in the Lie algebra case there are no homotopic methods available, a version of the 1-2-3 Theorem for Lie algebras was proved in \cite{K-MP}.

\section{Preliminaries on the homological type $FP_m$ and homological $\Sigma$-invariants} \label{prelim}

 \subsection{Preliminaries on the homological type $FP_m$} 

If not otherwise stated the modules considered in this paper are left ones. 

\medskip
{\bf Definition.} {\it Let $S$ be an associative ring with 1. An $S$-module $M$ is said to be of type $FP_n$ if there is a projective resolution
$$
 \ldots \to P_i \to P_{i-1} \to \ldots \to P_0 \to M \to 0
 $$
 with $P_i$ finitely generated for all $i \leq n$.
 We say that a group $G$ is of type $FP_n$ if the trivial $\Z G$-module $\Z$ is of type $FP_n$.}

\medskip



We will need later the following criterion for modules of type $FP_n$.

\begin{lemma} \label{biericriterion} \cite[Prop.~1.2, Thm~1.3+remarks]{Bieribook} Let $S$ be an associative ring with 1 and $n \geq 1$ be a natural number. The following are equivalent for  $S$-module $M$ :

1) $M$ is of type $FP_n$;

2) for a direct product $\prod S$ of arbitrary many copies of $S$ we have $Tor_k^S(\prod S, M)  = 0$ for $1 \leq k \leq n-1$ and $M$ is finitely presented as   $S$-module;

3) the functor $Tor_k^S( - , M)$ commutes with arbitrary direct product for $0 \leq k \leq n-1$.
\end{lemma}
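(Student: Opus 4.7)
The plan is to prove the three conditions equivalent via the cycle $(1) \Rightarrow (3) \Rightarrow (2) \Rightarrow (1)$. For $(1) \Rightarrow (3)$, I would choose a projective resolution $P_\bullet \to M$ with $P_i$ finitely generated projective for all $i \leq n$. Since each such $P_i$ is a direct summand of a free module $S^{m_i}$, and since the canonical comparison map $(\prod_\alpha N_\alpha) \otimes_S S^m \to \prod_\alpha (N_\alpha \otimes_S S^m)$ is a bijection (both sides equal $\prod_\alpha N_\alpha^{\, m}$) and stable under direct summands, one has $(\prod_\alpha N_\alpha) \otimes_S P_i \cong \prod_\alpha (N_\alpha \otimes_S P_i)$ for $i \leq n$. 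Because products are exact in the category of modules, $H_k$ commutes with arbitrary products, and as $H_k$ of the tensored complex depends only on degrees $k-1, k, k+1$, I obtain $Tor_k^S(\prod_\alpha N_\alpha, M) \cong \prod_\alpha Tor_k^S(N_\alpha, M)$ for every $0 \leq k \leq n-1$. For $(3) \Rightarrow (2)$, specializing $N_\alpha = S$ immediately yields $Tor_k^S(\prod S, M) = 0$ for $1 \leq k \leq n-1$ (since $Tor_k(S,M)=0$ for $k \geq 1$) and a canonical isomorphism $(\prod_I S) \otimes_S M \cong \prod_I M$ for every set $I$, which is precisely the Lenzing--Chase criterion for $M$ to be finitely presented.

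For $(2) \Rightarrow (1)$ I induct on $n$. The base $n=1$ reduces to the equivalence of finite presentation with $FP_1$. For the inductive step, hypothesis (2) with parameter $n-1$ is weaker, so by induction $M$ is $FP_{n-1}$ and admits a partial projective resolution $\cdots \to P_{n-1} \to \cdots \to P_0 \to M \to 0$ with $P_0, \ldots, P_{n-1}$ finitely generated projective. Set $K_i = \ker(P_i \to P_{i-1})$ with the convention $P_{-1} = M$, so $K_i = \Omega^{i+1} M$; dimension shifting through the short exact sequences $0 \to K_j \to P_j \to K_{j-1} \to 0$ gives $Tor_1^S(\prod S, K_{i-1}) \cong Tor_{i+1}^S(\prod S, M)$. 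By Lenzing, $M$ will be of type $FP_n$ as soon as the canonical map $\alpha_{K_{n-1}} : (\prod_I S) \otimes_S K_{n-1} \to \prod_I K_{n-1}$ is surjective for every $I$, since then a finitely generated free $P_n$ can be mapped onto $K_{n-1}$. I would prove by an inner induction on $i$ that $\alpha_{K_i}$ is in fact an \emph{isomorphism} for $0 \leq i \leq n-2$: starting from $\alpha_M$ iso (Lenzing applied to the finitely presented $M$), each step compares via the $5$-lemma the tensored $Tor$ long exact sequence against the $\prod$ of the short exact sequence, using that $\alpha_{P_i}$ is iso (finite generation of $P_i$), that $\alpha_{K_{i-1}}$ is iso (inner induction), and that $Tor_{i+1}^S(\prod S, M) = 0$ (hypothesis (2), valid exactly for $i+1 \leq n-1$).

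The main obstacle arises at the final index $i = n-1$: the $Tor$ term required to run the clean $5$-lemma argument is $Tor_n^S(\prod S, M)$, which condition (2) does not control. The way around this is that Lenzing's criterion for finite generation of $K_{n-1}$ (unlike finite presentation) demands only surjectivity of $\alpha_{K_{n-1}}$, and this surjectivity can be extracted by a direct diagram chase. Given $x \in \prod K_{n-1} \subseteq \prod P_{n-1}$, let $y = \alpha_{P_{n-1}}^{-1}(x) \in (\prod S) \otimes P_{n-1}$; commutativity together with injectivity of the iso $\alpha_{K_{n-2}}$ forces the image of $y$ in $(\prod S) \otimes K_{n-2}$ to vanish, so by exactness of the $Tor$ sequence at $(\prod S) \otimes P_{n-1}$ we lift $y = f(z)$ for some $z \in (\prod S) \otimes K_{n-1}$; and the injection $\prod K_{n-1} \hookrightarrow \prod P_{n-1}$ then forces $\alpha_{K_{n-1}}(z) = x$. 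Once $K_{n-1}$ is finitely generated a finitely generated free $P_n$ surjects onto it, completing the desired $FP_n$-resolution of $M$.
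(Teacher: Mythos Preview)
The paper does not supply its own proof of this lemma: it is stated as a citation to Bieri's \emph{Homological Dimension of Discrete Groups} (Prop.~1.2, Thm.~1.3 and the surrounding remarks), so there is no in-paper argument to compare against. Your proof is correct and is essentially the standard one found in that reference: the cycle $(1)\Rightarrow(3)\Rightarrow(2)\Rightarrow(1)$, the use of the Chase/Lenzing characterisation of finitely generated and finitely presented modules via the canonical map $(\prod S)\otimes_S M \to \prod M$, and the dimension-shifting induction through the syzygies $K_i$ are exactly how Bieri proceeds. Your handling of the last step---extracting mere \emph{surjectivity} of $\alpha_{K_{n-1}}$ by a diagram chase rather than an isomorphism via the 5-lemma, precisely because $Tor_n^S(\prod S,M)$ is not controlled by hypothesis (2)---is the correct observation and matches the care taken in the source.
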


{\bf Remark} We will apply the above lemma for $S = \Z G$, where $G$ is a finitely generated group and for $M = \mathbb{Z}$ the trivial $\Z G$-module. In this case $M$ is automatically finitely presented as $S$-module.
\medskip

The following result is well known and can be deduced after making appropriate modifications to the proof of \cite[Prop.~2.7]{Bieribook}, which uses spectral sequences and Lemma \ref{biericriterion}. A detailed proof can be found in Lima's PhD thesis \cite{Fr}.

\begin{prop} \label{shortexact} Let $A \to B \to C$ be a short exact sequence of groups.

a) if both $A$ and $C$ are of type $FP_n$ then $B$ is of type $FP_n$;

b) if $A$ is of type $FP_n$ and $B$ is of type $FP_{n+1}$ then $C$ is of type $FP_{n+1}$.
\end{prop}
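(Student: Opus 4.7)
My plan is to apply the Lyndon--Hochschild--Serre spectral sequence
$$E^2_{p,q}(M) = H_p(C, H_q(A, M)) \;\Longrightarrow\; H_{p+q}(B, M)$$
in combination with the criterion of Lemma~\ref{biericriterion}(3), which characterises type $FP_n$ by commutation of the homology functor with arbitrary direct products in the coefficient variable for degrees $\le n-1$. I will rely throughout on the fact that direct products are exact in the category of abelian groups, so they commute with kernels, cokernels, subquotients, and the filtration of a spectral-sequence abutment.

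For part (a), I would take an arbitrary family $\{M_i\}_{i\in I}$ of $\Z B$-modules. Since $A$ is of type $FP_n$, $H_q(A,-)$ commutes with direct products for $q\le n-1$; since $C$ is of type $FP_n$, the same holds for $H_p(C,-)$ with $p\le n-1$. Chaining these gives $E^2_{p,q}(\prod_i M_i)\cong\prod_i E^2_{p,q}(M_i)$ in the range $p+q\le n-1$; exactness of products then propagates the isomorphism through the pages to $E^\infty$ and through the finite filtration to the abutment, yielding $H_k(B,\prod_i M_i)\cong\prod_i H_k(B,M_i)$ for $k\le n-1$, which is what Lemma~\ref{biericriterion} demands.

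For part (b), I would induct on $n$. The base $n=0$ is immediate: $B$ is finitely generated, hence so is its quotient $C$. For the step, the hypotheses $A$ of type $FP_n$ and $B$ of type $FP_{n+1}$ entail in particular $A$ of type $FP_{n-1}$ and $B$ of type $FP_n$, so by induction $C$ is already of type $FP_n$; the new content is the commutation of $H_n(C,-)$ with arbitrary direct products of $\Z C$-modules. Fix such a family $\{M_i\}$ and inflate to $\Z B$-modules on which $A$ acts trivially. The point is that for trivial $A$-action the universal coefficient theorem gives a natural short exact sequence
$$0\to H_q(A,\Z)\otimes_\Z M \to H_q(A,M) \to \mathrm{Tor}^\Z_1(H_{q-1}(A,\Z),M)\to 0,$$
and since $A$ is of type $FP_n$ each $H_q(A,\Z)$ with $q\le n$ is finitely generated abelian; tensoring or Tor-ing over $\Z$ with a finitely generated abelian group commutes with arbitrary products, so $H_q(A,-)$ commutes with products on inflated modules for all $q\le n$. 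Together with $H_p(C,-)$ commuting with products for $p\le n-1$, this gives commutation at the $E^2$-level for every $(p,q)$ with $p\le n-1$ and $q\le n$.

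The final step uses $B$ of type $FP_{n+1}$ to conclude that $H_n(B,-)$ commutes with products, and then descends through the filtration on the abutment in total degree $n$: the lower quotients $E^\infty_{p,n-p}$ with $p\le n-1$ commute with products by the previous paragraph, whence so does the top quotient $E^\infty_{n,0}$. A reverse induction on $r$ along the chain of inclusions $E^\infty_{n,0}\hookrightarrow\cdots\hookrightarrow E^2_{n,0}=H_n(C,-)$, using the five-lemma at each stage and the fact that every cokernel $\mathrm{im}(d^r)$ is a subquotient of $E^2_{n-r,r-1}$ with $p\le n-2$ and $q\le n-1$ (the safe range), transports commutation with products all the way back to $H_n(C,-)$. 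The main obstacle is exactly this last reverse induction: one has to verify that every target of a differential leaving the top row $p=n$ lies within the range already controlled by the universal-coefficient argument and by the inductive hypothesis on $C$.
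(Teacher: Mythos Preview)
Your overall strategy---the LHS spectral sequence together with Lemma~\ref{biericriterion}---is exactly what the paper indicates (it does not give a proof, only points to Bieri's book and Lima's thesis). However, your choice to work with \emph{arbitrary} coefficient families $\{M_i\}$ via criterion~(3) creates a genuine gap in both parts.

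In part~(a) you assert that an isomorphism at $E^2_{p,q}$ for $p+q\le n-1$ ``propagates through the pages to $E^\infty$''. This is not automatic: the incoming differential $d^r_{p+r,q-r+1}\colon E^r_{p+r,q-r+1}\to E^r_{p,q}$ has source in total degree $p+q+1$, which can equal $n$, and you have no control there. (One can rescue this by proving the comparison map is \emph{surjective} in total degree $n$---this follows from the fact that for a group of type $FP_n$ the canonical map $H_n(-,\prod M_i)\to\prod H_n(-,M_i)$ is always an epimorphism---but you did not supply that argument.) The same issue infects part~(b): your reverse induction from $E^\infty_{n,0}$ to $E^2_{n,0}$ needs the comparison to be an isomorphism at $E^r_{n-r,r-1}$, and establishing that at page $r$ again requires control at earlier pages over spots of higher total degree.

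The clean fix, and almost certainly the intended one, is to use criterion~(2) of Lemma~\ref{biericriterion} instead: take $M_i=\Z B$ throughout. Then $\Z B$ is $\Z A$-free, so $H_q(A,\prod\Z B)=\prod H_q(A,\Z B)=0$ for $1\le q\le n-1$, and $H_0(A,\prod\Z B)=\prod\Z C$; hence $E^2_{p,q}=0$ for $1\le q\le n-1$ and $E^2_{p,0}=H_p(C,\prod\Z C)$. For~(a), $C$ of type $FP_n$ gives $E^2_{p,0}=0$ for $1\le p\le n-1$, so every $E^2_{p,q}$ with $1\le p+q\le n-1$ vanishes outright and $H_k(B,\prod\Z B)=0$ for $1\le k\le n-1$. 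For~(b), the same vanishing shows $E^\infty_{n,0}=E^2_{n,0}=H_n(C,\prod\Z C)$; since $B$ is $FP_{n+1}$ the abutment $H_n(B,\prod\Z B)$ vanishes, forcing $H_n(C,\prod\Z C)=0$. No UCT detour or reverse induction is needed.
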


\subsection{Preliminaries on the homological $\Sigma$-invariants}  For a finitely generated group $G$ we define $S(G) = Hom (G, \R) \setminus \{ 0 \} / \sim$, where for two characters $\chi_1$, $\chi_2 \in Hom (G, \R) \setminus \{ 0 \}$ we have $\chi_1 \sim \chi_2$ if there is a positive real number $r$ such that $\chi_1 = r \chi_2$. We write $[\chi]$ for the equivalence class of $\chi$ with respect to $\sim$. Thus 
$$S(G) \simeq S^{n-1},$$
where $n$ is the torsion-free rank of the abelianization of $G$. 
The n-dimensional Bieri-Renz $\Sigma$-invariant is defined by 
$$
\Sigma^n(G, \Z) = \{ [\chi] \mid \Z \hbox{ is of type } FP_{n} \hbox{ as } \Z G_{\chi}\hbox{-module} \},
$$
where $G_{\chi}$ is the monoid  $ \{ g \in G \mid \chi(g) \geq 0 \}$.
The following results  will be used later in the paper.


\begin{theorem}\cite[Theorem 5.1]{B-R} \label{teo-subesf.grande.contida.em.sigma}
Let $n \geq 1$ be a natural number, $G$ a group of type $FP_n$ and $N \subseteq G$  a normal subgroup such that  $G/N$ is abelian.  Then, $N$ is of type $FP_n$ if and only if $$S(G,N) \subseteq \ \Sigma^n(G, \Z).$$
\end{theorem}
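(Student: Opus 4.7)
The plan is to combine the intrinsic description of $\Sigma^n(G,\Z)$ (namely, $[\chi]\in\Sigma^n(G,\Z)$ iff $\Z$ is of type $FP_n$ as a $\Z G_\chi$-module) with the $\mathrm{Tor}$-criterion of Lemma \ref{biericriterion} applied to the ring $S=\Z N$. Write $A=G/N$, let $\pi:G\epi A$ be the quotient and note that every $\chi\in S(G,N)$ factors as $\chi=\bar\chi\circ\pi$, so $G_\chi=\pi^{-1}(A_{\bar\chi})$ already contains $N$.

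First I would reduce to the case $A\cong\Z^m$. If $T$ is the torsion subgroup of $A$, then $N':=\pi^{-1}(T)$ contains $N$ with finite index. Since $FP_n$ is inherited up and down along finite-index inclusions (via Proposition \ref{shortexact} applied to finite quotients), and since $S(G,N)=S(G,N')$ because characters kill torsion, we may assume $A\cong\Z^m$ and $S(G,N)\cong S^{m-1}$.

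For the forward direction, fix $\chi\in S(G,N)$ and a resolution $P_\bullet\to\Z$ by finitely generated projective $\Z N$-modules through degree $n$. Because $\chi$ vanishes on $N$, the ring $\Z G_\chi$ is free as a right $\Z N$-module on a set of coset representatives parametrised by $A_{\bar\chi}$. Extension of scalars gives an exact sequence
\[
P_n\otimes_{\Z N}\Z G_\chi\to\cdots\to P_0\otimes_{\Z N}\Z G_\chi\to\Z[A_{\bar\chi}]\to 0
\]
of finitely generated projective $\Z G_\chi$-modules. Splicing this with a finite Koszul-type free resolution of the trivial module over $\Z[A_{\bar\chi}]$ (available because $A_{\bar\chi}$ is a finitely generated commutative monoid obtained as a discrete half-space in $\Z^m$) and running the associated double-complex spectral sequence produces an $FP_n$-resolution of $\Z$ over $\Z G_\chi$. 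Hence $[\chi]\in\Sigma^n(G,\Z)$.

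For the converse, I would use that $\Sigma^n(G,\Z)$ is open in $S(G)$ and that integer characters are dense, together with compactness of $S(G,N)\cong S^{m-1}$, to cover $S(G,N)$ by finitely many integral characters $\chi_1,\ldots,\chi_s$ with $[\chi_i]\in\Sigma^n(G,\Z)$ whose half-spaces $A_{\bar\chi_i}$ together cover $A$. The $FP_n$-data for $\Z$ over each $\Z G_{\chi_i}$ then have to be assembled, via an iterated Mayer--Vietoris/spectral-sequence argument over the intersections of the monoids, into the vanishing $\mathrm{Tor}_k^{\Z N}\bigl(\prod\Z N,\Z\bigr)=0$ for $0<k<n$; Lemma \ref{biericriterion} then yields that $N$ is of type $FP_n$. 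The main obstacle is precisely this gluing step: passing from local $FP_n$-information over the monoid rings $\Z G_{\chi_i}$ to a global $FP_n$-statement over $\Z N$ is the technically delicate heart of the Bieri--Renz theorem, and it is what makes the finite-dimensionality (compactness) of the subsphere $S(G,N)$ indispensable.
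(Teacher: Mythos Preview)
The paper does not prove this statement at all: it is quoted verbatim as \cite[Theorem~5.1]{B-R} in the preliminaries section and used as a black box. There is therefore no ``paper's own proof'' to compare against; the authors simply cite Bieri--Renz.

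As for your sketch on its own merits, there is a concrete gap in the forward direction. You assert that $A_{\bar\chi}=\{a\in\Z^m:\bar\chi(a)\geq 0\}$ is a finitely generated commutative monoid, and you rely on this for a Koszul-type resolution of $\Z$ over $\Z[A_{\bar\chi}]$. This is false for characters with irrationally related coordinates: for instance, if $m=2$ and $\bar\chi(a,b)=a+\sqrt{2}\,b$, the half-space monoid is not finitely generated (there are infinitely many indecomposable elements near the boundary line). The actual Bieri--Renz argument does not proceed by reducing to properties of the monoid $A_{\bar\chi}$ in this way; it works instead with valuations on chain complexes and essential-subset arguments, which handle rational and irrational characters uniformly. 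Your converse direction is, as you yourself say, only an outline: the openness-plus-compactness reduction to finitely many rational characters is correct in spirit, but the gluing of local $FP_n$-data over overlapping monoid rings into a global statement over $\Z N$ is exactly the content of the theorem, and you have not supplied it.
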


The following result was published in \cite{G}. As stated in \cite{G} the result was proved  (unpublished) by Meinert and generalizes ideas from \cite{Meinert}. 

\begin{theorem} \cite[Lemma~9.1]{G} \label{teo-desig.meinert}
Let $G_1, G_2$ be groups of type  $FP_n$ with $n \geq 1$  and $\chi: G_1 \times G_2 \rightarrow \R$ be a character such that $\chi|_{(G_1 \times \textbf{1})} \neq 0 \ \hbox{ and } \ \chi|_{(\textbf{1} \times G_2)} \neq 0$. If $[\chi|_{(G_1 \times \textbf{1})}] \in \Sigma^k(G_1, \Z)$ and $[\chi|_{(\textbf{1} \times G_2)}] \in \Sigma^l(G_2, \Z)$ for some  $k, l \geq 0$ with $k+l < n$, then $[\chi] \in \Sigma^{k+l+1}(G_1 \times G_2, \Z)$.
\end{theorem}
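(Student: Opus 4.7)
\emph{Proof plan.} My strategy is to apply the Sikorav--Bieri--Renz reformulation of the homological $\Sigma$-invariant: for a group $G$ of type $FP_n$ and a nonzero character $\chi:G\to\R$, one has $[\chi]\in\Sigma^n(G,\Z)$ if and only if $H_i(G,\widehat{\Z G}_{-\chi})=0$ for every $i\leq n$, where $\widehat{\Z G}_{-\chi}$ denotes the Novikov completion of $\Z G$ associated to $-\chi$ (equivalently, in the paper's formulation, $\Z$ is of type $FP_n$ as a $\Z G_{\chi}$-module). To establish $[\chi]\in\Sigma^{k+l+1}(G_1\times G_2,\Z)$ it therefore suffices to verify that $H_r(G_1\times G_2,\widehat{\Z[G_1\times G_2]}_{-\chi})=0$ for every $r\leq k+l+1$.

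The main tool will be the Lyndon--Hochschild--Serre spectral sequence associated to the split extension $G_2\hookrightarrow G_1\times G_2\twoheadrightarrow G_1$,
\[
E^2_{pq}=H_p\bigl(G_1,\,H_q(G_2,M)\bigr)\ \Longrightarrow\ H_{p+q}(G_1\times G_2,M),
\]
with $M=\widehat{\Z[G_1\times G_2]}_{-\chi}$. Since $\chi_2\neq 0$, restricting the coefficients to $G_2$ exhibits $M$ as an arbitrary direct product of (shifted) copies of the Novikov ring $\widehat{\Z G_2}_{-\chi_2}$, one for each element of $G_1$. Because $G_2$ is of type $FP_l$, by Lemma \ref{biericriterion}(3) the functor $H_q(G_2,-)$ commutes with arbitrary direct products for $q\leq l$, and the hypothesis $[\chi_2]\in\Sigma^l(G_2,\Z)$ combined with the Sikorav--Bieri--Renz criterion then gives $H_q(G_2,M)=0$ for $q\leq l$. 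An entirely analogous analysis of the residual $G_1$-action on the remaining columns, using $[\chi_1]\in\Sigma^k(G_1,\Z)$ together with $G_1$ of type $FP_k$, yields $H_p\bigl(G_1,H_q(G_2,M)\bigr)=0$ whenever $p\leq k$.

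A bigrading check then finishes the argument: if $p+q\leq k+l+1$ one cannot simultaneously have $p\geq k+1$ and $q\geq l+1$, so either $p\leq k$ or $q\leq l$, and in either case $E^2_{pq}=0$. Consequently the abutment vanishes in total degree $\leq k+l+1$, which by the Sikorav--Bieri--Renz criterion gives $[\chi]\in\Sigma^{k+l+1}(G_1\times G_2,\Z)$ as required.

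The main obstacle will be the precise description of the $\Z G_2$-module (respectively $\Z G_1$-module) structure on $M$ in paragraph two. One must carefully decompose an element of $\widehat{\Z[G_1\times G_2]}_{-\chi}$ according to its $G_1$-coordinate and verify that the resulting fibers are exactly (shifted) copies of $\widehat{\Z G_2}_{-\chi_2}$; this uses in an essential way that $\chi(g_1,g_2)=\chi_1(g_1)+\chi_2(g_2)$ with $\chi_2\neq 0$, so that the Novikov finiteness condition on the $-\chi$-negative support translates into the corresponding condition on the $-\chi_2$-negative support along each fiber. This identification, together with the dual one for $G_1$, is where the interplay between the two characters is genuinely required.
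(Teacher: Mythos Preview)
First, a contextual note: the paper does not supply its own proof of this theorem; it is quoted from Gehrke \cite[Lemma~9.1]{G} (attributed there to Meinert) and used as a black box. So there is no in-paper argument to compare against. That said, your proposal has a genuine gap that is worth flagging.

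The crux of your argument is the claim that, as a $\Z G_2$-module,
\[
M=\widehat{\Z[G_1\times G_2]}_{-\chi}\;\cong\;\prod_{g_1\in G_1}\widehat{\Z G_2}_{-\chi_2}.
\]
This is not true in general. Certainly $M$ embeds into that product (projecting onto each $g_1$-fibre lands in $\widehat{\Z G_2}_{-\chi_2}$, since the global Novikov finiteness condition restricts to the fibrewise one). But the inclusion is strict: take the element of the product whose $g_1$-component is the identity $e\in G_2$ for every $g_1\in G_1$. The corresponding formal sum is $\sum_{g_1\in G_1}(g_1,e)$, and for this to lie in $M$ one needs, for every real $r$, only finitely many $g_1$ with $\chi_1(g_1)$ on the wrong side of $r$. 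As soon as $\ker\chi_1$ is infinite this fails. So the decomposition you invoke to feed Lemma~\ref{biericriterion} and the Sikorav criterion into the $E^2$-page does not hold, and with it the vanishing $H_q(G_2,M)=0$ for $q\le l$ is unproven. The same objection applies, mutatis mutandis, to your ``entirely analogous'' treatment of the $G_1$-action on $H_q(G_2,M)$ for $q>l$: you would need to identify that $\Z G_1$-module with a product of Novikov rings for $G_1$, and there is no reason to expect this.

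You correctly identify this module identification as ``the main obstacle'', but the final paragraph does not close the gap; it only records that $\chi$ is additive in the two coordinates, which gives the embedding into the product but not the surjection. The Meinert--Gehrke argument avoids Novikov rings altogether and works instead with Bieri--Renz valuations on the tensor product $F^{(1)}_*\otimes F^{(2)}_*$ of partial resolutions for $G_1$ and $G_2$; the extra ``$+1$'' in $k+l+1$ comes from a chain-homotopy argument exploiting that one can push in two independent directions. If you want to pursue the spectral-sequence route, you will need a finer structural description of $M$ over $\Z G_2$ (for instance as a filtered colimit of modules with the desired acyclicity), not the direct product you assert.
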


The above result was one of the reasons to believe in special formula for calculating $\Sigma$-invariants of direct product of groups, known as the   direct product conjecture for sigma invariants. As shown  by Bieri and Geoghegan in \cite{B-G}  this conjecture holds for the invariants $\Sigma^n(G, R)$, where $R$ is a field and $\Sigma^n(G, R)  = \{ [\chi] \mid R \hbox{ is of type } FP_{n} \hbox{ as } R G_{\chi}\hbox{-module} \}$. It turned out the conjecture is wrong in general, Sch\"utz proved in \cite{S} that the conjecture does not hold for $\Sigma^n(G, \Z)$ when $n \geq 4$.

\section{Homological version of a result of Kuckuck}

In this section we prove a homological version of  \cite[Prop. 4.3]{Benno}  with algebraic methods.  The starting point is the following result that is based on the classical Lyndon-Hoschild-Serre spectral sequence.

\begin{theorem} \label{teo-1.general}
Let $I$ be an index set, $n \geq 1$ a natural number  and $A \hookrightarrow B \twoheadrightarrow C$ a short exact sequence of groups. Furthermore we assume that $A$  is of type $FP_n$  and $B$  is of type $FP_{n+1}$, $M$ is a free $\Z B$-module and consider the LHS spectral sequence $$E^2_{p,q} = H_p(C, H_q(A, \displaystyle\prod_{\alpha \in I}M_{\alpha}))$$
converging to $H_{p+q}(B, \displaystyle\prod_{\alpha \in I}M_{\alpha})$, where $M_{\alpha} = M$ for $\alpha \in I$. Then $$E_{n+1,0}^{n+1} = E_{n+1, 0}^2 = H_{n+1}(C, H_0(A, \displaystyle\prod_{\alpha \in I}M_{\alpha})), E_{0,n}^{n+1} = E_{0,n}^2 = H_0(C, H_n(A, \displaystyle\prod_{\alpha \in I}M_{\alpha}))$$ and the differential  $$d_{n+1,0}^{n+1}: H_{n+1}(C, H_0(A, \displaystyle\prod_{\alpha \in I}M_{\alpha})) \longrightarrow H_0(C, H_n(A, \displaystyle\prod_{\alpha \in I}M_{\alpha}))$$ is surjective.
\end{theorem}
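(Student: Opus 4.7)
The plan is to identify the vanishing pattern on the $E^2$-page that forces the asserted stabilizations, and then to use the $FP_{n+1}$ hypothesis on $B$ to force surjectivity of $d^{n+1}_{n+1,0}$.

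First, I would observe that the freeness of $M$ as a $\Z B$-module, combined with the coset decomposition $B = \bigsqcup_{c \in C} A \tilde{c}$ (for any set-theoretic section $c \mapsto \tilde{c}$), shows that $\Z B$ is a free $\Z A$-module and hence that each $M_\alpha = M$ is a free, in particular projective, $\Z A$-module. Consequently $H_q(A, M_\alpha) = 0$ for $q \geq 1$. Since $A$ is of type $FP_n$, Lemma \ref{biericriterion} applied to $S = \Z A$ and the trivial module $\Z$ yields that $H_q(A, -) = \mathrm{Tor}_q^{\Z A}(\Z, -)$ commutes with arbitrary direct products for $0 \leq q \leq n-1$. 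Hence
$$H_q(A, \prod_{\alpha \in I} M_\alpha) \cong \prod_{\alpha \in I} H_q(A, M_\alpha) = 0 \quad \text{for } 1 \leq q \leq n-1,$$
i.e.\ the rows $q = 1, \ldots, n-1$ of the $E^2$-page vanish identically.

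Next I would deduce the two stabilization statements by inspection of bidegrees. For the position $(n+1, 0)$: incoming $d_r$ comes from $(n+1+r, 1-r)$ which lies below the first quadrant, while outgoing $d_r$, $2 \leq r \leq n$, has target $(n+1-r, r-1)$, lying in the vanishing rows $1 \leq q \leq n-1$. Therefore $E^{n+1}_{n+1, 0} = E^2_{n+1, 0}$. Symmetrically for the position $(0, n)$: outgoing $d_r$ has target in the strict left half-plane, and incoming $d_r$ for $2 \leq r \leq n$ has source $(r, n-r+1)$ again in the vanishing rows. Hence $E^{n+1}_{0, n} = E^2_{0, n}$.

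For surjectivity I would invoke that $B$ is of type $FP_{n+1}$: Lemma \ref{biericriterion} gives $H_n(B, \prod_\alpha M_\alpha) \cong \prod_\alpha H_n(B, M_\alpha)$, which vanishes because $M_\alpha$ is a free (hence projective) $\Z B$-module. Since the spectral sequence converges to $H_{p+q}(B, \prod_\alpha M_\alpha)$, every $E^\infty_{p,q}$ with $p+q = n$ vanishes; in particular $E^\infty_{0, n} = 0$. Tracing the fate of $E^{n+1}_{0, n}$: on every page $r \geq n+1$ the outgoing differential leaves the first quadrant, and for $r \geq n+2$ the incoming differential has source $(r, n-r+1)$ with $n-r+1 \leq -1$, hence zero. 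So the only potentially nontrivial differential affecting $E^{n+1}_{0,n}$ afterwards is $d^{n+1}_{n+1, 0}$, giving
$$0 = E^\infty_{0, n} = E^{n+1}_{0, n} / \mathrm{image}(d^{n+1}_{n+1, 0}),$$
which forces $d^{n+1}_{n+1, 0}$ to be surjective.

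There is no serious obstacle: the argument is a careful bookkeeping on a spectral sequence whose $E^2$-page has its nonzero entries in the relevant range confined to two rows ($q = 0$ and $q = n$), combined with two applications of the $FP_m$-criterion of Lemma \ref{biericriterion}—once on $A$ to kill the intermediate rows, once on $B$ to kill the total-degree-$n$ abutment.
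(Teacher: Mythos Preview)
Your proof is correct and follows the same spectral-sequence strategy as the paper: use $FP_n$ for $A$ to kill rows $1\le q\le n-1$, read off the two stabilizations from bidegree bookkeeping, and use $FP_{n+1}$ for $B$ to force $E^\infty_{0,n}=0$ and hence surjectivity of $d^{n+1}_{n+1,0}$. One small difference is worth noting: you never invoke that $C$ is of type $FP_{n+1}$. The paper derives this from Proposition~\ref{shortexact}~b) and uses it to show $E^2_{p,0}=0$ for $1\le p\le n$, which in turn yields the stronger intermediate identification $E^\infty_{0,n}\cong H_n(B,\prod_\alpha M_\alpha)$ \emph{before} using that $B$ is $FP_{n+1}$. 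You bypass this by observing that once $H_n(B,\prod_\alpha M_\alpha)=0$, every $E^\infty$-term on the diagonal $p+q=n$ vanishes as a subquotient, so in particular $E^\infty_{0,n}=0$. Your shortcut suffices for the theorem as stated and also lets you treat all $n\ge 1$ uniformly (the paper handles $n=1$ separately); however, the paper's extra step is not wasted, since the isomorphism $E^\infty_{0,n}\cong H_n$, proved without the $FP_{n+1}$ hypothesis on $B$, is precisely what gets reused in the proof of Proposition~\ref{homology1}.
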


\begin{proof} 
First we assume  $n = 1$. In this case  $d_{2,0}^2 : E_{2,0}^2 \to E_{0,1}^2$. Since $B$ is $FP_2$ by  Lemma \ref{biericriterion} we have
 $H_1(B, \displaystyle\prod_{\alpha \in I}M_{\alpha}) = \displaystyle\prod_{\alpha \in I} H_1(B, M_{\alpha}) = 0$, hence by the convergence of the LHS spectral sequence  $E_{0,1}^{\infty} = 0$. Note that all differentials that enter and leave $E_{0,1}^j$ are zero if $j \geq 3$, so $0 =E_{0,1}^{\infty} = E_{0,1}^{3} = E_{0,1}^2/ Im (d_{2,0}^2)$, hence $d_{2,0}^2 $ is surjective.

From now on we assume that $n \geq 2$. We split the proof in several steps. 

{\bf Step 1.} Note that by Proposition \ref{shortexact} b) $C$ is of type $FP_{n+1}$.
Since $A$  is of type $FP_n$, by Lemma \ref{biericriterion}, we have that $$H_q(A, \displaystyle\prod_{\alpha \in I}M_{\alpha}) = \displaystyle\prod_{\alpha \in I}H_q(A, M_{\alpha}) \textrm{ for } 0 \leq q \leq n-1.$$ Now since  
 $M_{\alpha} = M$  is a free  $\Z B$-module for $\alpha \in I$, we have $H_q(A, M_{\alpha}) = \textbf{0}$ for $q \geq 1$ and  $$M \cong \displaystyle\bigoplus_{\beta \in J}(\Z B)_{\beta} \hbox{ for an index set }J,$$where $(\Z B)_{\beta} = \Z B$ for $ \beta \in J$. Since direct sum commutes with tensor product  for all  $ \alpha \in I$, we have  $$H_0(A,M_{\alpha}) \cong \Z \otimes_{\Z A}(\displaystyle\bigoplus_{\beta \in J}(\Z B)_{\beta}) \cong \displaystyle\bigoplus_{\beta \in J}(\Z (B/A))_{\beta} \cong \displaystyle\bigoplus_{\beta \in J}(\Z C)_{\beta} =: \tilde{M},$$
 where $(\Z C)_{\beta} = \Z C$ for  $\beta \in J$. Thus 
\begin{equation} \label{eq-E2.general}
E_{p,q}^2 = \left\{
\begin{array}{ccccccc}
\textbf{0}& \mbox{ if } & 1 \leq q \leq n-1\\
H_p(C, \displaystyle\prod_{\alpha \in I} \tilde{M}_{\alpha}) & \mbox{ if } & q = 0 
\end{array}
\right.
\end{equation} 
where $\tilde{M}_{\alpha} = \tilde{M}$ for all $ \alpha \in I$ and $\tilde{M}$ is a free $\Z C$-module.
Since $C$ is of type $FP_{n+1}$, by Lemma \ref{biericriterion} 
\begin{equation} \label{eq-homol.comut.com.prod.general}
H_p(C, \displaystyle\prod_{\alpha \in I}\tilde{M}_{\alpha}) \cong \displaystyle\prod_{\alpha \in I} H_p(C, \tilde{M}_{\alpha}) \textrm{ for } 0 \leq p \leq n
\end{equation} 
 and since $\tilde{M}_{\alpha}$ is a free $\Z C$-module,  we have that
\begin{equation} \label{kumon34} H_p(C, \tilde{M}_{\alpha}) = \textbf{0}, \hbox{ for all } \alpha \in I  \textrm{ and } p \geq 1.\end{equation}  
It follows by (\ref{eq-homol.comut.com.prod.general}) and (\ref{kumon34}) that
\begin{equation} \label{eq-homol.com.M.é.0.general}
H_p(C, \displaystyle\prod_{\alpha \in I}\tilde{M}_{\alpha}) = \textbf{0}, \textrm{ for } 1 \leq p \leq n.
\end{equation} 
Thus we obtain by (\ref{eq-E2.general}), that 
\begin{equation} \label{eq-E2.general.segundo}
E_{p,q}^2 = \textbf{0}, \mbox{ if } 1 \leq q \leq n-1, \textrm{ or } q = 0 \ \hbox{ and } \ 1 \leq p \leq n.
\end{equation} 

{\bf Step 2.}
Consider the differentials $$\xymatrix{E^i_{i,n+1-i} \ar[rr]^{d^i_{i,n+1-i}} & & E^{i}_{0,n} \ar[r]^-{d^i_{0,n}} & E^i_{-i,n+i-1} = \textbf{0}},$$ 
where $ E^i_{-i,n+i-1} = 0 $ since $ - i < 0$.
By definition
\begin{equation} \label{eq-Ei+1.=.Ei.general}
E^{i+1}_{0,n} = \frac{ker(d^i_{0,n})}{im(d^i_{i,n+1-i})} = \frac{E^i_{0,n}}{im(d^i_{i,n+1-i})}.
\end{equation} 
By (\ref{eq-E2.general.segundo})
\begin{equation} \label{eq-E2.i.general}
E_{i,n+1-i}^2 = \textbf{0}, \mbox{ if } 2 \leq i \leq n.
\end{equation}
On other hand for $i \geq n+2$ we have $n+1-i < 0$ and this implies that $E^2_{i,n+1-i} = \textbf{0}$ for $i \geq n+2$. Using this and  (\ref{eq-E2.i.general}),  we conclude that $E^2_{i,n+1-i} = {0} $ if $ i \neq n+1$ and $ \ i \geq 2.$
Hence $$ E^i_{i,n+1-i} = \textbf{0} \textrm{ if } i \neq n+1 \ \hbox{ and } \ i \geq 2, $$ and this implies that \begin{equation} \label{kumon201} im(d^i_{i,n+1-i}) = \textbf{0} \textrm{ if } i \neq n+1 \ \hbox{ and } \ i \geq 2.\end{equation} By (\ref{eq-Ei+1.=.Ei.general}) and (\ref{kumon201}) we obtain that $$E^{i+1}_{0,n} = E^i_{0,n}, \textrm{ if } i \neq n+1 \ \hbox{ and } \ i \geq 2,$$ hence
\begin{equation} \label{eq-igual.E0,s.general}
E^2_{0,n} = E^3_{0,n} = \ldots = E^n_{0,n} = E^{n+1}_{0,n} \ \ \ \ \hbox{ and } \ \ \ \ E^{n+2}_{0,n} = E^{n+3}_{0,n} = \ldots = E^{\infty}_{0,n}.
\end{equation}
This implies that  $d^{n+1}_{n+1,0} : E_{n+1,0}^{n+1} \to E^{n+1}_{0,n}$  has as codomain $E_{0,n}^{2} = H_0(C, H_n(A, \displaystyle\prod_{\alpha \in I}M_{\alpha}))$.

{\bf Step 3.} 
We will show that $E^{n+1}_{n+1,0} = E^2_{n+1,0}$.

Consider the differentials $$\xymatrix{E^i_{n+1+i,1-i} \ar[rr]^{d^i_{n+1+i,1-i}} & & E^i_{n+1,0} \ar[rr]^-{d^i_{n+1,0}} & & E^i_{n+1-i,i-1}}$$ 
For $i \geq 2$,  we have $E^i_{n+1+i,1-i} = \textbf{0}$, since $1-i < 0$. Then, $im(d^i_{n+1+i,1-i}) = \textbf{0}$ and so 
\begin{equation} 
E^{i+1}_{n+1,0}  = \frac{ker(d^i_{n+1,0})}{im(d^i_{n+1+i,1-i})} = ker(d^i_{n+1,0}).
\end{equation}
Now, by (\ref{eq-E2.general.segundo}), $E^i_{n+1-i,i-1} = \textbf{0}$ if $1 \leq i-1 \leq n-1$. Then $ker(d^i_{n+1,0}) = E^i_{n+1,0}$ if $2 \leq i \leq n$ and this implies $E^{i+1}_{n+1,0} = E^i_{n+1,0}$ if $2 \leq i \leq n$. Hence $$E^2_{n+1,0} = E^3_{n+1,0} = \ldots = E^n_{n+1,0} = E^{n+1}_{n+1,0}$$ and so $$E^{n+1}_{n+1,0} = E^2_{n+1,0} = H_{n+1}(C, H_0(A, \displaystyle\prod_{\alpha \in I}M_{\alpha})).$$

{\bf Step 4.} By the convergence of the LHS spectral sequence there is a filtration
\begin{equation} \label{eq-filtr.general}
\textbf{0} = \Phi^{-1}H_n \subseteq \Phi^{0}H_n \subseteq \ldots \subseteq \Phi^{n-1}H_n \subseteq \Phi^{n}H_n = H_n
\end{equation}
such that
\begin{equation} \label{eq-E.infty.é.quoc.da.filtr.general}
E^{\infty}_{p,q} \cong \Phi^pH_n/\Phi^{p-1}H_n \textrm{ for }   p+q=n,
\end{equation}
 where $H_n$ denotes $H_n(B, \displaystyle\prod_{\alpha \in I}M_{\alpha})$.  
By (\ref{eq-E2.general.segundo})   we deduce that $E_{p,q}^2 = \textbf{0}$ if $p + q = n \ \hbox{ and } \ p \neq 0$. Hence 
\begin{equation} \label{eq-E.infty.é.0.general}
E^{\infty}_{p,q} = \textbf{0}, \textrm{ if } p+q = n \ \hbox{ and } \ p \neq 0.
\end{equation}
Using the filtration (\ref{eq-filtr.general}), by (\ref{eq-E.infty.é.quoc.da.filtr.general})  and (\ref{eq-E.infty.é.0.general}),  we get  $$\textbf{0} = \Phi^{-1}H_n \subseteq \Phi^0H_n = \ldots = \Phi^{n-1}H_n = \Phi^{n}H_n = H_n.$$ Then \begin{equation} \label{kumon1} E^{\infty}_{0,n} \cong \Phi^0H_n/\Phi^{-1}H_n = H_n = H_n(B, \displaystyle\prod_{\alpha \in I}M_{\alpha}).\end{equation}
Note that by now we have used only the fact that $A$ is $FP_n$ and $C$ is $FP_{n+1}$ but we have {\bf not used} that  $B$ is of type $FP_{n+1}$. Since $B$ is of type $FP_{n+1}$,
by Lemma  \ref{biericriterion} and since for all $ \alpha \in I, M_{\alpha} = M$ is a free $\Z B$-module we obtain that $$H_n(B, \displaystyle\prod_{\alpha \in I}M_{\alpha}) \cong \displaystyle\prod_{\alpha \in I}H_n(B, M_{\alpha}) = \displaystyle\prod_{\alpha \in I} 0 = \textbf{0} $$ and so 
\begin{equation} \label{eq-Einfty.é.0.general.segundo}
E^{\infty}_{0,n} = \textbf{0}
\end{equation}
By   (\ref{eq-igual.E0,s.general}) and (\ref{eq-Einfty.é.0.general.segundo}) we have
\begin{equation} \label{eq-Es+2.general}
E^{n+2}_{0,n} = \textbf{0}.
\end{equation}
Consider  the differentials $$\xymatrix{E^{n+1}_{n+1,0} \ar[rr]^{d^{n+1}_{n+1,0}} & & E^{n+1}_{0,n} \ar[rr]^-{d^{n+1}_{0,n}} & & E^{n+1}_{-n-1,2n} = \textbf{0}}$$ and note that $$E^{n+2}_{0,n} = \frac{ker(d^{n+1}_{0,n})}{im(d^{n+1}_{n+1,0})} = \frac{E^{n+1}_{0,n}}{im(d^{n+1}_{n+1,0})}.$$ Then by (\ref{eq-Es+2.general}) we obtain that  $d^{n+1}_{n+1,0}$ is surjective.

\end{proof}

\begin{prop} \label{homology1}
Let  $n \geq 1$ be a natural number, $B$ a group with a normal subgroup $A$ such that $A$ is of type $FP_n$ and $C = B/A$ is of type $FP_{n+1}$.  Then
$B$ is of type $FP_{n+1}$ if and only if for any  direct product the map
$$
d_{n+1,0}^{n+1}: H_{n+1}(C, H_0(A, \displaystyle\prod \Z B)) \longrightarrow H_0(C, H_n(A, \displaystyle\prod \Z B))
$$
is surjective , where $d_{n+1,0}^{n+1}$ is the differential from LHS spectral sequence 
$
E_{p,q}^2 = H_p(C, H_q(A, \prod \mathbb{Z} B))$ converging to $H_{p+q}(B, \prod \mathbb{Z} B)$.
\end{prop}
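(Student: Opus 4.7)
The plan is to read Proposition \ref{homology1} as a two--way strengthening of Theorem \ref{teo-1.general} obtained by specializing $M = \Z B$, which is a free $\Z B$--module of rank one. The forward direction is then immediate: if $B$ is of type $FP_{n+1}$, then all the hypotheses of Theorem \ref{teo-1.general} are met (note that $C$ is anyway of type $FP_{n+1}$ by assumption, and also by Proposition \ref{shortexact}(b)), and the theorem, applied with $M = \Z B$, gives precisely the surjectivity of $d_{n+1,0}^{n+1}$ for every direct product $\prod \Z B$.

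For the converse, first observe that the spectral sequence inspection carried out in Steps 1--3 of the proof of Theorem \ref{teo-1.general} uses only that $A$ is of type $FP_n$ and $C$ is of type $FP_{n+1}$; the same is true of the derivation of the isomorphism $E^\infty_{0,n} \cong H_n(B, \prod \Z B)$ given in Step 4 up to equation (\ref{kumon1}). In particular, the vanishing (\ref{eq-E2.general.segundo}) kills $E^2_{p,q}$ at every lattice point $(p,q)$ with $p+q = k$ and $1 \leq k \leq n-1$, and since each $E^\infty_{p,q}$ is a subquotient of $E^2_{p,q}$, convergence of the LHS spectral sequence yields
$$H_k(B, \prod \Z B) = 0 \quad \hbox{for } 1 \leq k \leq n-1.$$
The remaining case $k=n$ is handled by (\ref{eq-igual.E0,s.general}), which together with the definition of $E^{n+2}_{0,n}$ as the homology of $d^{n+1}_{n+1,0}$ at $E^{n+1}_{0,n}$ gives
$$H_n(B, \prod \Z B) \cong E^{\infty}_{0,n} = E^{n+2}_{0,n} = E^{n+1}_{0,n} / im(d^{n+1}_{n+1,0}).$$
Hence the assumed surjectivity of $d^{n+1}_{n+1,0}$ forces $H_n(B, \prod \Z B) = 0$ as well, so $Tor_k^{\Z B}(\prod \Z B, \Z) = 0$ for every $1 \leq k \leq n$ and every direct product of copies of $\Z B$.

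To finish, Proposition \ref{shortexact}(a) shows that $B$ is of type $FP_n$, in particular finitely generated, so $\Z$ is finitely presented as a $\Z B$--module by the Remark following Lemma \ref{biericriterion}. Applying Lemma \ref{biericriterion} with $n$ there replaced by $n+1$ concludes that $B$ is of type $FP_{n+1}$. The real work is already contained in Theorem \ref{teo-1.general}; the only point of the converse that needs attention is tracking the diagonals $p+q = k$ for $k < n$ to observe that their $E^2$--vanishing already produces $H_k(B, \prod \Z B) = 0$ without any extra hypothesis on $B$, so that the whole content of the converse really is encoded in the single differential $d^{n+1}_{n+1,0}$.
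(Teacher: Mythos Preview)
Your proof is correct and follows the same approach as the paper: both directions rest on the identities $E^{n+1}_{0,n}=E^2_{0,n}$, $E^\infty_{0,n}=E^{n+2}_{0,n}=\operatorname{coker}(d^{n+1}_{n+1,0})$, and $E^\infty_{0,n}\cong H_n(B,\prod\Z B)$, all extracted from the proof of Theorem \ref{teo-1.general} without assuming $B$ is $FP_{n+1}$, after which Lemma \ref{biericriterion} closes the argument. You are more explicit than the paper about the vanishing of $H_k(B,\prod\Z B)$ for $1\le k\le n-1$ (the paper's appeal to Lemma \ref{biericriterion} tacitly uses that $B$ is already $FP_n$ by Proposition \ref{shortexact}(a), which you invoke anyway); note only that the equations you cite from Steps 1--4 are stated in the paper for $n\ge 2$, so the case $n=1$ requires the obvious one-line adaptation already present in the first paragraph of the proof of Theorem \ref{teo-1.general}.
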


\begin{proof} 
Note that one of the direction is precisely Theorem \ref{teo-1.general} . Assume from now on that the differential  $d_{
n+1,0}^{n+1}$ is surjective.
We apply the  notations of the proof of Theorem \ref{teo-1.general}   with $M =  \mathbb{Z} B$. Note that in the proof of (\ref{kumon1}) we have not used that $B$ is of type $FP_{n+1}$. Then we can apply (\ref{kumon1}) for $M = \Z B$ and deduce by Lemma \ref{biericriterion} that $$B \hbox{  is of type }FP_{n+1} \hbox{ if and only if }E_{0,n}^{\infty} = 0.$$  Note that in (\ref{eq-igual.E0,s.general}) of the proof of Theorem \ref{teo-1.general}  we have not used that $B$ is $FP_{n+1}$. By (\ref{eq-igual.E0,s.general}) we have 
$$
E^{2}_{0,n} = E_{0,n}^{n+1}, E_{0,n}^{n+2} = E^{\infty}_{0,n}.
$$ 
Thus $$B \hbox{ is of type }FP_{n+1} \hbox{ if and only if }0 = E_{0,n}^{n+2}.$$ 
Since $d^{n+1}_{0,n} : E_{0,n}^{n+1} \to E^{n+1}_{-n-1, 2n} = 0$ is the zero map, $E_{0,n}^{n+2} = ker(d_{0,n}^{n+1})/ im (d_{n+1, 0}^{n+1}) = E_{0,n}^{n+1} / im (d_{n+1, 0}^{n+1})$ is the cokernel of $d_{n+1,0}^{n+1}$. Thus $$0 = E_{0,n}^{n+2} \hbox{ if and only if }d_{n+1,0}^{n+1} \hbox{ is surjective.}$$
\end{proof}

\begin{cor} \label{homology2} Let $n \geq 1$ be a natural number,  $ A \to B \to C$ be a short exact sequence of groups  such that $A$ is of type $FP_n$ and $C $ is of type $FP_{n+1}$.  Then

a) if $H_0(C, H_n(A, \prod \mathbb{Z} B)) = 0$ for any  direct product $\prod \mathbb{Z} B$ then $B$ is of type $FP_{n+1}$;

b) if $C$ is of type $FP_{n+2}$ then $$B \hbox{ is of type }FP_{n+1} \hbox{ if and only if  }H_0(C, H_n(A, \prod \mathbb{Z} B)) = 0 $$ for any  direct product $\prod \mathbb{Z} B$.

\end{cor}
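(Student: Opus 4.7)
The plan is to derive both parts of the corollary as immediate consequences of Proposition \ref{homology1}, which already reduces the $FP_{n+1}$ property of $B$ to surjectivity of the differential
$$d_{n+1,0}^{n+1}: H_{n+1}(C, H_0(A, \textstyle\prod \mathbb{Z}B)) \longrightarrow H_0(C, H_n(A, \textstyle\prod \mathbb{Z}B)).$$

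For part (a), the hypothesis says that the codomain of $d_{n+1,0}^{n+1}$ is the zero module. Any map into the zero module is vacuously surjective, so Proposition \ref{homology1} immediately gives that $B$ is of type $FP_{n+1}$.

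For part (b), the ``if'' direction is just part (a). For the ``only if'' direction, assume $B$ is of type $FP_{n+1}$; by Proposition \ref{homology1} the differential $d_{n+1,0}^{n+1}$ is surjective. The key observation is that its \emph{domain} vanishes under the stronger hypothesis on $C$. Indeed, as in the proof of Theorem \ref{teo-1.general} we may write $H_0(A, \prod \mathbb{Z}B) \cong \prod \tilde{M}_\alpha$ where each $\tilde{M}_\alpha$ is a free $\mathbb{Z}C$-module, so $H_{n+1}(C, \tilde{M}_\alpha) = 0$. Since $C$ is assumed to be of type $FP_{n+2}$, Lemma \ref{biericriterion} says that $\mathrm{Tor}^{\mathbb{Z}C}_{n+1}(-, \mathbb{Z})$ commutes with arbitrary direct products, hence
$$H_{n+1}(C, H_0(A, \textstyle\prod \mathbb{Z}B)) \cong \textstyle\prod H_{n+1}(C, \tilde{M}_\alpha) = 0.$$
A surjection out of the zero module forces the target $H_0(C, H_n(A, \prod \mathbb{Z}B))$ to vanish as well, which is what we want.

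There is no real obstacle here: the corollary is purely a bookkeeping consequence of Proposition \ref{homology1} combined with the $FP$-criterion from Lemma \ref{biericriterion}. The only point worth checking carefully is that the identification $E^{n+1}_{n+1,0} = E^2_{n+1,0} = H_{n+1}(C, \prod \tilde{M}_\alpha)$ established in Step 3 of the proof of Theorem \ref{teo-1.general} used only that $A$ is $FP_n$ and $C$ is $FP_{n+1}$, so we are free to substitute the stronger hypothesis $C \in FP_{n+2}$ to conclude that this term is already zero at the $E^2$-page.
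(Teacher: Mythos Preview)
Your proof is correct and follows essentially the same route as the paper's own argument: both parts are immediate from Proposition~\ref{homology1}, with part (b) using that $C$ of type $FP_{n+2}$ forces the domain $H_{n+1}(C, H_0(A,\prod \Z B)) \cong H_{n+1}(C, \prod \Z C)$ to vanish. The only cosmetic difference is that the paper argues part (b) in one stroke (domain is zero, hence surjectivity is equivalent to vanishing of the codomain, then invoke Proposition~\ref{homology1} for the full ``if and only if''), whereas you split it into the ``if'' direction (which is part (a)) and the ``only if'' direction separately; the content is identical.
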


\begin{proof} Part a) follows directly from Proposition \ref{homology1}, since in this case the co-domain of $d_{n+1,0}^{n+1}$ is $0$, so $d_{n+1,0}^{n+1}$ is surjective.
 
To prove part b) note that if $C$ is of type $FP_{n+2}$ then $H_{n+1}(C, \prod \mathbb{Z} C) = \prod H_{n+1}(C, \mathbb{Z} C) = \prod  0 = 0$. Since $A$ is finitely generated $H_0(A, - )$ commutes with direct products, hence 
$$
 H_{n+1}(C, H_0(A, \displaystyle\prod\Z B)) \simeq 
 H_{n+1}(C,\displaystyle\prod  H_0(A, \Z B)) = H_{n+1}(C, \displaystyle\prod \Z C)) = 0.$$
 Thus the domain of the map $d_{n+1,0}^{n+1} $ is $0$, so
$d^{n+1}_{n+1, 0}$ is the zero map and $d_{n+1,0}^{n+1} $  is surjective if and only if  $H_0(C, H_n(A, \prod \mathbb{Z} B)) = 0 $. Finally by Proposition \ref{homology1} $B$ is $FP_{n+1}$ if and only if $d_{n+1,0}^{n+1} $  is surjective. 
\end{proof}

Below we restate and prove Theorem A.

\begin{theorem}  \label{teo-kuckuck.general}
Let $n \geq 1$ be a natural number, $A \hookrightarrow B \twoheadrightarrow C$ a short exact sequence of groups with  $A$ of type $FP_n$ and $C$ of type $FP_{n+1}$. Assume there is another short exact sequence of groups $A \hookrightarrow B_0 \twoheadrightarrow C_0$ with $B_0$ of type $FP_{n+1}$ and that there is a group homomorphism $\theta: B_0 \rightarrow B$ such that $\theta|_A = id_A$,  i.e. there is a commutative diagram of homomorphisms of groups $$\xymatrix{A \ \ar@{^{(}->}[r] \ar[d]_{id_A} & B_0 \ar@{->>}[r]^{\pi_0} \ar[d]_{\theta} & C_0 \ar@{.>}[d]^{\nu} \\ A \ \ar@{^{(}->}[r] & B \ar@{->>}[r]^{\pi} & C}$$Then $B$ is of type $FP_{n+1}$.
\end{theorem}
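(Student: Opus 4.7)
The plan is to apply Proposition \ref{homology1}: since $A$ is of type $FP_n$ and $C$ of type $FP_{n+1}$, to conclude $B$ is of type $FP_{n+1}$ it suffices to show that for every index set $I$ the differential
\[d_{\mathrm{bot}}\colon H_{n+1}(C, H_0(A, \prod_I \mathbb{Z} B)) \longrightarrow H_0(C, H_n(A, \prod_I \mathbb{Z} B))\]
on the $E^{n+1}$-page of the LHS spectral sequence for $A \hookrightarrow B \twoheadrightarrow C$ with coefficients $\prod_I \mathbb{Z} B$ is surjective.

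In parallel I would consider the LHS spectral sequence for the top row $A \hookrightarrow B_0 \twoheadrightarrow C_0$ with the same coefficients $\prod_I \mathbb{Z} B$, now regarded as a $\mathbb{Z} B_0$-module via $\theta$; denote the analogous differential by $d_{\mathrm{top}}$. The commutative diagram in the hypothesis induces a morphism of the two spectral sequences. Since $\theta|_A = \mathrm{id}_A$ makes $\mathbb{Z} B$ free over $\mathbb{Z} A$, together with $A$ of type $FP_n$ we get $H_q(A, \prod_I \mathbb{Z} B) = 0$ for $1 \leq q \leq n-1$, so exactly as in Theorem \ref{teo-1.general} the $(n+1)$-page entries in positions $(n+1,0)$ and $(0,n)$ agree with their $E^2$-values in both spectral sequences. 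This yields a commutative square
\[
\begin{array}{ccc}
H_{n+1}(C_0, H_0(A, \prod_I \mathbb{Z} B)) & \xrightarrow{d_{\mathrm{top}}} & H_0(C_0, H_n(A, \prod_I \mathbb{Z} B)) \\
\downarrow & & \downarrow \\
H_{n+1}(C, H_0(A, \prod_I \mathbb{Z} B)) & \xrightarrow{d_{\mathrm{bot}}} & H_0(C, H_n(A, \prod_I \mathbb{Z} B))
\end{array}
\]
in which the right vertical arrow is the canonical surjection $H_0(C_0, N) \twoheadrightarrow H_0(C, N)$ of coinvariants induced by $\nu$ (for $N = H_n(A, \prod_I \mathbb{Z} B)$). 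A routine diagram chase---lifting a target element through the right vertical, pulling back through $d_{\mathrm{top}}$, and transporting via the left vertical---then shows that surjectivity of $d_{\mathrm{top}}$ forces surjectivity of $d_{\mathrm{bot}}$.

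The main obstacle is therefore to prove surjectivity of $d_{\mathrm{top}}$. One cannot apply Theorem \ref{teo-1.general} directly to the top sequence, because $\mathbb{Z} B$ is generally not a free $\mathbb{Z} B_0$-module when $\theta$ is not injective, and the vanishings $H_p(C_0, \prod_I \mathbb{Z} C) = 0$ for $1 \leq p \leq n$ and $H_n(B_0, \prod_I \mathbb{Z} B) = 0$ needed to rerun that argument fail in general. I would handle this by factoring $\theta$ through its image $\theta(B_0) \leq B$ and exploiting that $\ker\theta \cap A = \{1\}$ (which follows from $\theta|_A = \mathrm{id}_A$): the LHS spectral sequence associated to $\ker\theta \hookrightarrow B_0 \twoheadrightarrow \theta(B_0)$ lets one control the extra contributions coming from $\ker\theta$, while over $\theta(B_0)$ the module $\mathbb{Z} B$ is free and the argument of Theorem \ref{teo-1.general} applies, yielding the required surjectivity of $d_{\mathrm{top}}$ and hence of $d_{\mathrm{bot}}$.
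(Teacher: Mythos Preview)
Your overall architecture matches the paper exactly: reduce via Proposition~\ref{homology1} to surjectivity of $d_{\mathrm{bot}}$, use naturality of the LHS spectral sequence to get the commutative square, observe that the right vertical $H_0(C_0,N)\to H_0(C,N)$ is surjective, and thereby reduce to surjectivity of $d_{\mathrm{top}}$. That part is fine.

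The gap is in your treatment of the ``main obstacle''. Factoring $\theta$ through its image does not help: you have no finiteness information about $\theta(B_0)$, $\nu(C_0)$, or $\ker\theta$, so neither Theorem~\ref{teo-1.general} nor Proposition~\ref{shortexact} applies to the sequences $A\hookrightarrow\theta(B_0)\twoheadrightarrow\nu(C_0)$ or $\ker\theta\hookrightarrow B_0\twoheadrightarrow\theta(B_0)$. In particular, the phrase ``the argument of Theorem~\ref{teo-1.general} applies'' hides an unjustified hypothesis ($\theta(B_0)$ of type $FP_{n+1}$), and even granting it you would only get surjectivity of a differential in a \emph{third} spectral sequence, not of $d_{\mathrm{top}}$ itself. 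The auxiliary spectral sequence for $\ker\theta\hookrightarrow B_0\twoheadrightarrow\theta(B_0)$ is unrelated to the one in which $d_{\mathrm{top}}$ lives, so ``controlling the extra contributions from $\ker\theta$'' is not a usable statement.

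The paper's way around this is what you are missing. One does not need $H_n(B_0,\prod_I\mathbb{Z}B)=0$; one only needs the edge map
\[
\widehat{\theta}\colon H_n\Bigl(B_0,\prod_I\mathbb{Z}B\Bigr)\longrightarrow \mathcal{E}^{\infty}_{n,0}\cong H_n\Bigl(C_0,\prod_I\mathbb{Z}C\Bigr)
\]
to be an isomorphism, since then $\mathcal{E}^{\infty}_{0,n}=\ker\widehat{\theta}=0$ and hence $d_{\mathrm{top}}$ is surjective. For a \emph{single} copy of $\mathbb{Z}B$ the spectral sequence for $A\hookrightarrow B_0\twoheadrightarrow C_0$ collapses completely (because $H_q(A,\mathbb{Z}B)=0$ for \emph{all} $q\geq 1$, not just $1\leq q\leq n-1$), giving $H_n(B_0,\mathbb{Z}B)\cong H_n(C_0,\mathbb{Z}C)$. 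Now use that $B_0$ is $FP_{n+1}$ by hypothesis and $C_0$ is $FP_{n+1}$ by Proposition~\ref{shortexact}: both $H_n(B_0,-)$ and $H_n(C_0,-)$ commute with direct products, so the single-copy isomorphism yields the product isomorphism $\widehat{\theta}$. This is the key idea you should replace your last paragraph with.
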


\begin{proof} We break the proof in several steps.

{\bf Step 1.} Consider the LHS spectral sequence $$E^2_{p,q} = H_p(C_0, H_q(A, \Z B)) \underset{p}{\Rightarrow} H_{p+q}(B_0, \Z B)$$
i.e. this is the standard Lyndon-Hoschild-Serre spectral sequence applied for the short exact sequence $A \to B_0 \to C_0$ and the $\Z B_0$-module $\Z B$, where we view 
$\Z B$ as a $\Z B_0$-module via $\theta$.
Note that $H_q(A, \Z B) = \textbf{0}$ for $q \geq 1$  since $\Z B$ is a free  $\Z A$-module and
\begin{equation} \label{eq-9}
H_0(A, \Z B) \cong \Z \otimes_{\Z A}\Z B \cong \Z (B/A) \cong \Z C.
\end{equation}
It follows that 
\begin{equation} \label{eq-1}
E_{p,q}^2 = \left\{
\begin{array}{ccccccc}
\textbf{0}& \mbox{if} & q \geq 1\\
H_p(C_0, \Z C) & \mbox{if} & q = 0,  
\end{array}
\right.
\end{equation} 
hence the spectral sequence collapses and $E_{n,0}^{\infty}  = E_{n,0}^2 = H_n(C_0, \Z C)$ for every $n \geq 0$.
By the convergence of the LHS spectral sequence there is a filtration
$$
\textbf{0} = \Phi^{-1}H_n \subseteq \Phi^{0}H_n \subseteq \ldots \subseteq \Phi^{n-1}H_n \subseteq \Phi^{n}H_n = H_n
$$
such that
$$
E^{\infty}_{p,q} \cong \Phi^pH_n/\Phi^{p-1}H_n \textrm{ for }   p+q=n,
$$
where $H_n$ denotes $H_n(B_0, \Z B) $.
Thus 
\begin{equation} \label{eq-filtr.general001}
\textbf{0} = \Phi^{-1}H_n = \Phi^{0}H_n = \ldots = \Phi^{n-1}H_n \subseteq \Phi^{n}H_n = H_n
\end{equation}
and so
the homomorphism \begin{equation} \label{eq-6}\varphi: H_n =  H_n(B_0, \Z B) \to E_{n,0}^{\infty} \simeq H_n(C_0, \Z C) \hbox{  is an isomorphism for } n \geq 0,\end{equation}  where $\varphi$ is induced by $\pi_0: B_0 \twoheadrightarrow C_0$ and by the homomorphism $\pi_{\#}: \Z B \twoheadrightarrow \Z C$ that itself is induced by $\pi$.

{\bf Step 2.} Let $I$ be  an  index set. Consider the LHS spectral sequence $$\mathcal{E}^2_{p,q} = H_p(C_0, H_q(A, \displaystyle\prod_{\alpha \in I}(\Z B)_{\alpha})) \underset{p}{\Rightarrow} H_{p+q}(B_0, \displaystyle\prod_{\alpha \in I}(\Z B)_{\alpha}),$$  associated to the short exact sequence of groups $A \to B_0 \to C_0$, where $(\Z B)_{\alpha} = \Z B$ for $\alpha \in I$ and $\Z B$ is $\Z B_0$-module via $\theta$.
Since $A$ is of type $FP_n$ by Lemma \ref{biericriterion}, it follows that 
\begin{equation} \label{newn} H_q(A, \displaystyle\prod_{\alpha \in I}(\Z B)_{\alpha}) = \displaystyle\prod_{\alpha \in I}H_q(A,(\Z B)_{\alpha})\hbox{ for }0 \leq q \leq n-1. \end{equation} Furthermore $H_q(A,(\Z B)_{\alpha}) = H_q(A,\Z B) = \textbf{0}$ for $q \geq 1$,  since $\Z B$ is a free $\Z A$-module. Then
\begin{equation} \label{eq-7}
\mathcal{E}^2_{p,q} = \textbf{0} \textrm{ if } 1 \leq q \leq n-1.
\end{equation}
Then since  $\mathcal{E}^{\infty}_{p,q}$  is a subquotient of  $\mathcal{E}^2_{p,q}$ we obtain that
\begin{equation} \label{eq-8}
\mathcal{E}^{\infty}_{p,q} = \textbf{0} \textrm{ if } 1 \leq q \leq n-1.
\end{equation} 
Observe that $C_0$ is of type  $FP_{n+1}$ by Proposition \ref{shortexact}, hence by Lemma \ref{biericriterion} the functor $H_n(C_0, - )$ commutes with direct products. Since $A$ is finitely generated the functor $H_0(A, - )$ commutes with direct products. This together with   (\ref{eq-9})  implies the isomorphisms $$\mathcal{E}^2_{n,0} =  H_n(C_0, H_0(A, \displaystyle\prod_{\alpha \in I}(\Z B)_{\alpha})) \cong  H_n(C_0, \displaystyle\prod_{\alpha \in I} H_0(A, (\Z B)_{\alpha})) \cong $$ \begin{equation} \label{kumon333} \displaystyle\prod_{\alpha \in I}H_n(C_0,  H_0(A, (\Z B)_{\alpha}))  \cong \displaystyle\prod_{\alpha \in I}H_n(C_0, (\Z C)_{\alpha}),\end{equation} where $(\Z C)_{\alpha} = \Z C$ and  $(\Z B)_{\alpha} = \Z B$, for every $\alpha \in I$.

{\bf Step 3.} Consider the differentials   $$\xymatrix{\mathcal{E}^i_{n+i,1-i} \ar[rr]^{\delta^i_{n+i,1-i}} & & \mathcal{E}^{i}_{n,0} \ar[r]^-{\delta^i_{n,0}} & \mathcal{E}^i_{n-i,i-1}}.$$ Then $\mathcal{E}^i_{n+i,1-i} = \textbf{0}$, since $1-i < 0$, and furthermore  $\mathcal{E}^i_{n-i,i-1} = \textbf{0}$ for $2 \leq i \leq n$ by (\ref{eq-7}). Note that $\mathcal{E}^i_{n-i,i-1} = \textbf{0}$ for $i > n$,  since in this case $n-i<0$. Thus   $$\mathcal{E}^{i+1}_{n,0} = \frac{ker(\delta^i_{n,0})}{im(\delta^i_{n+i,1-i})} = \mathcal{E}^i_{n,0} \textrm{ for every } i \geq 2.$$
This together with  (\ref{kumon333})  implies
\begin{equation} \label{eq-10}
\mathcal{E}^{\infty}_{n,0} = \mathcal{E}^2_{n,0} \simeq H_n(C_0, \displaystyle\prod_{\alpha \in I}(\Z C)_{\alpha}).
\end{equation}
By the convergence of the spectral sequence there is a filtration
\begin{equation} \label{eq-11}
\textbf{0} = \Lambda^{-1}H_n \subseteq \Lambda^{0}H_n \subseteq \ldots \subseteq \Lambda^{n-1}H_n \subseteq \Lambda^{n}H_n = H_n,
\end{equation}
such that
\begin{equation} \label{eq-12}
\mathcal{E}^{\infty}_{p,q} \cong \Lambda^pH_n/\Lambda^{p-1}H_n \textrm{ for }p+q = n,
\end{equation}
where to simplify the notation we denote $H_n(B_0,\displaystyle\prod_{\alpha \in I}(\Z B)_{\alpha})$ by $H_n$.  
By  (\ref{eq-8}) and (\ref{eq-12}) we have  $$\textbf{0} = \Lambda^{-1} H_n \subseteq  \Lambda^0H_n = \ldots = \Lambda^{n-1}H_n \subseteq \Lambda^nH_n = H_n = H_n(B_0, \displaystyle\prod_{\alpha \in I}(\Z B)_{\alpha}).$$ Thus, on one hand $$\mathcal{E}^{\infty}_{0,n} \cong \Lambda^0H_n/\Lambda^{-1}H_n  \cong \Lambda^0H_n = \Lambda^{n-1}H_n.$$ And on other hand $$\mathcal{E}^{\infty}_{n,0} \cong \Lambda^nH_n/\Lambda^{n-1}H_n = H_n/\Lambda^{n-1}H_n \cong H_n/\mathcal{E}^{\infty}_{0,n}.$$ Then we have a short exact sequence of groups 
\begin{equation} \label{ses123} \mathcal{E}^{\infty}_{0,n} \hookrightarrow H_n \overset{\widehat{\theta}}{\twoheadrightarrow} \mathcal{E}^{\infty}_{n,0},\end{equation} where the epimorphism $\widehat{\theta}$ is induced by the epimorphism $\pi_0: B_0 \twoheadrightarrow C_0$ e $\pi_{\#}: \Z B \twoheadrightarrow \Z C$, where $\pi_{\#}$ is a ring  epimorphism induced by the epimorphism  of groups $\pi$. 

{\bf Step 4.} We claim that $$\widehat{\theta} : H_n(B_0, \prod_{\alpha \in I} (\Z B)_{\alpha}) \to H_n(C_0, \prod_{\alpha \in I} (\Z C)_{\alpha}) $$ is an isomorphism. In fact, by (\ref{eq-6}), there is a group  isomorphism $\varphi$ that induces an isomorphism $$\Pi \varphi: \displaystyle\prod_{\alpha \in I}H_n(B_0, (\Z B)_{\alpha}) \overset{\sim}{\to} \displaystyle\prod_{\alpha \in I}H_n(C_0, (\Z C)_{\alpha}) .$$ 
Since $B_0$ and $C_0$ are $FP_{n+1}$, we have that both functors $H_n(B_0,  - )$ and $H_n(C_0, - )$ commute with direct products. Then, $\Pi \varphi$  induces the homomorphism of groups $\widehat{\theta}$.
Then by (\ref{ses123})
\begin{equation} \label{eq-13}
\mathcal{E}^{\infty}_{0,n} = Ker (\widehat{\theta}) = \textbf{0}.
\end{equation}

{\bf Step 5.} Consider the differentials for  $i \geq 2$ $$\xymatrix{\mathcal{E}^i_{i,n+1-i} \ar[rr]^{\delta^i_{i,n+1-i}} & & \mathcal{E}^{i}_{0,n} \ar[r]^-{\delta^i_{0,n}} & \mathcal{E}^i_{-i,n+i-1}}.$$ Then by  (\ref{eq-7}),  $\mathcal{E}^i_{i,n+1-i} = \textbf{0}$ for $2 \leq i \leq n$   and  $\mathcal{E}^i_{i,n+1-i} = \textbf{0}$ for $i \geq n+2$ since $n+1-i<0$. Furthermore $\mathcal{E}^i_{-i,n+i-1} = {0}$, since $-i < 0$ . Thus we have  $$\mathcal{E}^{i+1}_{0,n} = \frac{ker(\delta^i_{0,n})}{im(\delta^i_{i,n+1-i})} \cong \mathcal{E}^i_{0,n} \textrm{ for every } i \geq 2 \ \hbox{ and } \ i \neq n+1.$$
Then
\begin{equation} \label{eq-14}
\mathcal{E}^{2}_{0,n} \cong \ldots \cong \mathcal{E}^{n+1}_{0,n} \ \ \hbox{ and } \ \ \mathcal{E}^{n+2}_{0,n} \cong \ldots \cong \mathcal{E}^{\infty}_{0,n}.
\end{equation}
By (\ref{eq-13})  and (\ref{eq-14}) we have that 
\begin{equation} \label{eq-15}
\mathcal{E}^{n+2}_{0,n} \cong \mathcal{E}^{\infty}_{0,n} = \textbf{0}.
\end{equation}

{\bf Step 6.} 
Consider the differentials  $$\xymatrix{\mathcal{E}^{n+1}_{n+1,0} \ar[rr]^{\delta^{n+1}_{n+1,0}} & & \mathcal{E}^{n+1}_{0,n} \ar[r]^-{\delta^{n+1}_{0,n}} & \mathcal{E}^{n+1}_{-n-1,2n}}.$$ Note that $\mathcal{E}^{n+1}_{-n-1,2n} = \textbf{0}$, since $-n-1 < 0$, then $ker(\delta^{n+1}_{0,n}) = \mathcal{E}^{n+1}_{0,n}.$ This together with (\ref{eq-15}) implies $$\textbf{0} = \mathcal{E}^{n+2}_{0,n} := \frac{ker(\delta^{n+1}_{0,n})}{im(\delta^{n+1}_{n+1,0})} = \frac{\mathcal{E}^{n+1}_{0,n}}{im(\delta^{n+1}_{n+1,0})}.$$ Then $im(\delta^{n+1}_{n+1,0}) = \mathcal{E}^{n+1}_{0,n}$ and  we conclude that
\begin{equation} \label{eq-16}
\delta^{n+1}_{n+1,0}: \mathcal{E}^{n+1}_{n+1,0} \longrightarrow \mathcal{E}^{n+1}_{0,n} \textrm{  is surjective }. 
\end{equation}
As in the proof of Theorem \ref{teo-1.general} $\delta^{n+1}_{n+1,0}$ has the following domain and co-domain: $$\xymatrix{\delta^{n+1}_{n+1,0}: \mathcal{E}^{2}_{n+1,0} \ar@{->>}[rr] & & \mathcal{E}^{2}_{0,n}}.$$

{\bf Step 7.} By the naturality of the LHS spectral sequence, we have the following commutative diagram of group homomorphisms : 
\begin{equation} \label{eq-18}
\xymatrix{H_{n+1}(C_0,H_0(A, \displaystyle\prod_{\alpha \in I}(\Z B)_{\alpha})) \ar@{->>}[rr]^{\delta^{n+1}_{n+1,0}} \ar[d]_{\mu_1} & & H_0(C_0, H_n(A, \displaystyle\prod_{\alpha \in I}(\Z B)_{\alpha})) \ar[d]^{\mu_2} \\  H_{n+1}(C,H_0(A, \displaystyle\prod_{\alpha \in I}(\Z B)_{\alpha})) \ar[rr]_{\psi^{n+1}_{n+1,0}} & & H_0(C, H_n(A, \displaystyle\prod_{\alpha \in I}(\Z B)_{\alpha}))}
\end{equation}
where $\mu_1$ and $\mu_2$ are induced by $\nu : C_0 \to C$ and  $\psi^{n+1}_{n+1,0}$ is the differencial of the LHS spectral sequence  $$H_p(C, H_q(A, \displaystyle\prod_{\alpha \in I}(\Z B)_{\alpha})) \underset{p}{\Rightarrow} H_{p+q}(B, \displaystyle\prod_{\alpha \in I}(\Z B)_{\alpha})$$ associated to the short exact sequence $A \to B \to C$. Set $V =  H_n(A, \displaystyle\prod_{\alpha \in I}(\Z B)_{\alpha})$. 
Recall that $\nu : C_0 \to C$ is induced by $\theta$ and $V$ is a left  $\Z C_0$-module  via  the homomorphism $\nu$. Thus we have $$H_0(C_0,V) \cong \frac{V}{Aug(\Z C_0)V} = \frac{V}{Aug(\Z(im(\nu)))V} \ \ \ \hbox{ and } \ \ H_0(C,V) \cong \frac{V}{Aug(\Z C)V},$$
where $Aug$ denotes the augmentation ideal of the appropriate group algebra. Then $$\mu_2: \frac{V}{Aug(\Z(im(\nu)))V} \longrightarrow \frac{V}{Aug(\Z C)V}$$ is the enlargment homomorphism, hence is surjective.  By the commutative diagram (\ref{eq-18}), since $\delta^{n+1}_{n+1,0}$  and $\mu_2$ are both surjective, we deduce that $\psi^{n+1}_{n+1,0}$ is surjective too and hence by Proposition \ref{homology1} we have that $B$  is of type $FP_{n+1}$.
\end{proof}

Note that several results in \cite{Benno} are deduced as corollaries of the technical  \cite[Prop.~ 4.3]{Benno}. As we proved the homological version  Theorem \ref{teo-kuckuck.general} of \cite[Prop.~ 4.3]{Benno} we will deduce in the following propositions  that the homological versions of several results of \cite{Benno} hold too. The proofs of the following results will use significantly Theorem \ref{teo-kuckuck.general} plus ideas from \cite{Benno}. 

The following proposition implies Theorem B. As the statement  of Proposition \ref{split} shows, in the case when the second short exact sequence splits, there is no need to assume that $Q$ is of type $FP_{n+2}$ as in the Homological $n$-$(n+1)$-$(n+2)$ Conjecture. 

\begin{prop} \label{split}  Let $n \geq 1$ be a natural number, $N_1 \to G_1 \to Q$ and $N_2 \to G_2 \to Q$ be short exact sequences with $G_1$ and $G_2$ groups of type $FP_{n+1}$ such that $N_1$  is of type $FP_{n}$ and the second sequence splits. Then the fibre $P$, associated to the above short exact sequences, is of homological type $FP_{n+1}$. In particular the Homological $n$-$(n+1)$-$(n+2)$ Conjecture holds if the second sequence splits.
\end{prop}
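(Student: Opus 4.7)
The plan is to deduce this directly from Theorem \ref{teo-kuckuck.general} (Theorem A). First I would identify a short exact sequence of groups arising from the fibre product in which the relevant homological finiteness properties are already present as hypotheses. Concretely, projection of $P \subseteq G_1 \times G_2$ onto the second factor induces a surjection $P \twoheadrightarrow G_2$ whose kernel is $N_1 \times \{1\} \cong N_1$. This gives a short exact sequence
$$ N_1 \hookrightarrow P \twoheadrightarrow G_2, $$
with $N_1$ of type $FP_n$ and $G_2$ of type $FP_{n+1}$. This will play the role of $A \hookrightarrow B \twoheadrightarrow C$ in Theorem \ref{teo-kuckuck.general}.

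Next I would produce the auxiliary short exact sequence and the homomorphism $\theta$ required by Theorem \ref{teo-kuckuck.general}. The candidate for the auxiliary sequence is the given sequence $N_1 \hookrightarrow G_1 \stackrel{f_1}{\twoheadrightarrow} Q$, in which $G_1$ is of type $FP_{n+1}$. Using the splitting $s : Q \to G_2$ of the second sequence, I would define
$$ \theta : G_1 \to P, \qquad \theta(g_1) = (g_1, s(f_1(g_1))). $$
The equation $f_2(s(f_1(g_1))) = f_1(g_1)$ shows that $\theta$ lands in $P$, and $\theta$ is a homomorphism since $s$ and $f_1$ are. For $n_1 \in N_1 = \ker f_1$ we get $\theta(n_1) = (n_1, 1)$, which under the identification $N_1 \cong N_1 \times \{1\} \subseteq P$ is just the identity on $N_1$. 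Taking $\nu = s : Q \to G_2$ makes the required commutative diagram
$$\xymatrix{N_1 \ \ar@{^{(}->}[r] \ar[d]_{id} & G_1 \ar@{->>}[r]^{f_1} \ar[d]_{\theta} & Q \ar[d]^{s} \\ N_1 \ \ar@{^{(}->}[r] & P \ar@{->>}[r] & G_2}$$
commute.

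Finally, Theorem \ref{teo-kuckuck.general} applied with $A = N_1$, $B = P$, $C = G_2$, $B_0 = G_1$, $C_0 = Q$ yields that $P$ is of type $FP_{n+1}$. Since this argument never invokes any finiteness hypothesis on $Q$, it also establishes the ``in particular'' statement about the Homological $n$-$(n+1)$-$(n+2)$ Conjecture in the split case. There is no real obstacle here beyond assembling the data correctly; the whole force of the result is packaged in Theorem \ref{teo-kuckuck.general}, and the role of the splitting $s$ is precisely to provide the section-like homomorphism $\theta$ that the hypotheses of that theorem demand.
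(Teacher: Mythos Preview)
Your proof is correct and follows essentially the same approach as the paper: both set up the short exact sequence $N_1 \hookrightarrow P \twoheadrightarrow G_2$, use the splitting $s$ to build the homomorphism $G_1 \to P$, $g_1 \mapsto (g_1, s(f_1(g_1)))$ restricting to the identity on $N_1$, and then apply Theorem~\ref{teo-kuckuck.general}. The paper is simply terser, citing \cite{Benno} for the construction of this map rather than writing it out explicitly.
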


\begin{proof} As in the proof of \cite[Cor,~4.6]{Benno} there is a homomorphism $\phi : G_1 \to P$ whose restriction to $N_1$ is the identity map.  Then  we obtain the following exact diagram $$\xymatrix{N_1 \ \ar@{^{(}->}[r] \ar[d]_{id_{N_1}} & \ G_1 \ar@{->>}^{\pi_1}[r] \ar[d]^{\phi} & Q \ar[d]^{\sigma_2} \\ N_1 \ \ar@{^{(}->}[r] & \ P \ar@{->>}^{p_2}[r] & G_2}$$
Finally by Teorema \ref{teo-kuckuck.general}, $P$  is of type $FP_{n+1}$.
\end{proof}

The following corollary proves Theorem C. The second part of Corollary \ref
{Q-fin-pres} will be used in the proof of Theorem \ref{homFinitePres}.

\begin{cor}  \label{Q-fin-pres}
If the  Homological $n$-$(n+1)$-$(n+2)$ Conjecture holds whenever $G_2$ is a finitely generated free group then it holds in general. If 
the  Homological $n$-$(n+1)$-$(n+2)$ Conjecture holds whenever $G_2$ is a finitely generated free group and $Q$ is finitely presented then it holds if $Q$ is finitely presented without restrictions on $G_2$.
\end{cor}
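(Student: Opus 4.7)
The plan is a straightforward reduction: given the general data $N_1 \hookrightarrow G_1 \stackrel{\pi_1}{\twoheadrightarrow} Q$ and $N_2 \hookrightarrow G_2 \stackrel{\pi_2}{\twoheadrightarrow} Q$, I will construct an auxiliary fiber product over $Q$ in which $G_2$ is replaced by a finitely generated free group, apply the hypothesized restricted case of the conjecture there, and then transfer the conclusion to $P$ by Theorem \ref{teo-kuckuck.general}. Since $Q$ is of type $FP_{n+2}$ (respectively finitely presented, for the second assertion), $Q$ is in particular finitely generated, so one can choose a finitely generated free group $F$ together with an epimorphism $\phi: F \twoheadrightarrow Q$. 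Let $\tilde P \subseteq G_1 \times F$ denote the fibre product of $\pi_1$ and $\phi$.

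The two short exact sequences $N_1 \hookrightarrow G_1 \stackrel{\pi_1}{\twoheadrightarrow} Q$ and $\ker\phi \hookrightarrow F \stackrel{\phi}{\twoheadrightarrow} Q$ satisfy exactly the hypotheses of the Homological $n$-$(n+1)$-$(n+2)$ Conjecture with $F$ in the role of $G_2$ ($N_1$ is of type $FP_n$, $G_1$ and $F$ are of type $FP_{n+1}$, and $Q$ is of type $FP_{n+2}$; in the second assertion, $Q$ is additionally finitely presented, matching the stronger restricted hypothesis). By assumption the conjecture holds in this restricted setting, so $\tilde P$ is of type $FP_{n+1}$.

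To finish, I apply Theorem \ref{teo-kuckuck.general} with $A = N_1$, $B = P$, $C = G_2$, $B_0 = \tilde P$ and $C_0 = F$, using the natural short exact sequences $N_1 \hookrightarrow P \twoheadrightarrow G_2$ (the second projection from $P$ is surjective because $\pi_1$ is, with kernel $N_1\times\{1\}$) and $N_1 \hookrightarrow \tilde P \twoheadrightarrow F$. All the finiteness requirements of Theorem \ref{teo-kuckuck.general} are in place; what remains is to produce a homomorphism $\theta: \tilde P \to P$ restricting to the identity on $N_1$. This is where the freeness of $F$ is used: choose a homomorphism $s: F \to G_2$ with $\pi_2 \circ s = \phi$ by lifting a free basis of $F$ to arbitrary preimages under $\pi_2$, and then define $\theta(g,f) := (g, s(f))$. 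A direct check ($\pi_2(s(f)) = \phi(f) = \pi_1(g)$) shows $\theta$ indeed takes values in $P$, and $\theta$ restricts to the identity on $N_1 \cong N_1 \times \{1\}$. Theorem \ref{teo-kuckuck.general} then gives $P$ of type $FP_{n+1}$.

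The only nontrivial input is the use of the restricted form of the conjecture to produce $\tilde P$ of type $FP_{n+1}$; everything else is formal, the essential feature being that $F$ is free (so the section $s$, and hence $\theta$, exist) and that $F$ is of type $FP_{n+1}$ (in fact $FP_\infty$), so it can play the role of $G_2$ in the hypothesis. The same reduction works verbatim under the extra assumption that $Q$ is finitely presented, giving the second half of the statement.
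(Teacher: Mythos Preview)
Your proof is correct and follows essentially the same strategy as the paper: replace $G_2$ by a finitely generated free group, apply the restricted conjecture to the auxiliary fibre product, and transfer the conclusion to $P$ via Theorem~\ref{teo-kuckuck.general}. The only cosmetic difference is that the paper chooses $F$ together with a surjection $p:F\twoheadrightarrow G_2$ (so $\theta=id_{G_1}\times p$ is immediate), whereas you choose $F$ with a surjection $\phi:F\twoheadrightarrow Q$ and then use freeness of $F$ to lift through $\pi_2$; since Theorem~\ref{teo-kuckuck.general} places no surjectivity requirement on the induced map $C_0\to C$, both variants work equally well.
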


\begin{proof} We prove the first statement, the proof of the second statement is the same.
Let $N_1 \hookrightarrow G_1 \stackrel{\pi_1}{\twoheadrightarrow} Q$ and  $N_2 \hookrightarrow G_2 \stackrel{\pi_2}{\twoheadrightarrow} Q$ be short exact sequences of groups such that  $N_1$  is of type $FP_n$, $G_1$ and $G_2$ are of type $FP_{n+1}$  and $Q$ is of type $FP_{n+2}$. Let $P$  be the fibre product  $$P = \{(g_1, g_2) \in G_1 \times G_2 : \pi_1(g_1) = \pi_2(g_2)\}$$ and let  $p: F \twoheadrightarrow G_2$  be an epimorphism, where  $F$ is a finitely generated free group. Consider the short exact sequences of groups  $N_1 \hookrightarrow G_1 \stackrel{\pi_1}{\twoheadrightarrow} Q$ and $ker(\pi_2 \circ p) \hookrightarrow F \stackrel{\pi_2 \circ p}{\twoheadrightarrow} Q$,  and denote by $P'$ the fibre product  $$P' = \{(g_1,f) \in G_1 \times F : \pi_1(g_1) = \pi_2 \circ p(f)\}.$$ By assumption, the homological $n$-$(n+1)$-$(n+2)$ Conjecture holds whenever the middle group in the second exact sequence is finitely generated and free, so $P'$  is of type $FP_{n+1}$. Consider the following commutative diagram, whose rows are short exact sequences $$\xymatrix{N_1 \times \textbf{1} \ \ar@{^{(}->}[r] \ar[d]_{id_{N_1 \times \textbf{1}}} & \ P' \ar@{->>}^{p'_2}[r] \ar[d]^{id_{G_1} \times p} & F \ar[d]^{p} \\ N_1 \times \textbf{1} \ \ar@{^{(}->}[r] & \ P \ar@{->>}^{p_2}[r] & G_2}$$ where $p'_2: P' \twoheadrightarrow F$ is given by $p'_2(g_1,f) = f$, $p_2:P \twoheadrightarrow G_2$  is defined by  $p_2(g_1, g_2) = g_2$  and $id_{G_1} \times p: P' \rightarrow P$ is 
restriction of $id_{G_1} \times p: G_1 \times F \to G_1 \times G_2$. Then by Theorem \ref{teo-kuckuck.general}  $P$  is of type $FP_{n+1}$. 
\end{proof}

The following result is Theorem D from the introduction.

\begin{theorem}
Let $n \geq 1$, $N_1 \hookrightarrow G_1 \stackrel{\pi_1}{\twoheadrightarrow} Q $ and $N_2 \hookrightarrow G_2 \stackrel{\pi_2}{\twoheadrightarrow} Q$ be short exact sequences of groups, where $G_1, G_2$ are of type $FP_{n+1}$, $Q$ is virtually abelian, $N_1$ is of type $FP_k$  and $N_2$ is of type $FP_l$ for some $k,l \geq 0$ with $k + l \geq n$. Then the fibre product $P$ of $\pi_1$  and $\pi_2$ is of type $FP_{n+1}$.
\end{theorem}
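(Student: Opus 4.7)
The plan is to realise $P$ as the kernel of a homomorphism from $G_1\times G_2$ onto $Q$ and then to detect its $FP_{n+1}$ property using the $\Sigma$-invariant criterion Theorem~\ref{teo-subesf.grande.contida.em.sigma} combined with Meinert's inequality Theorem~\ref{teo-desig.meinert}. Since that criterion requires an \emph{abelian} quotient, the first step is a finite-index reduction. Let $Q_0\leq Q$ be a free abelian subgroup of finite index, set $G_i':=\pi_i^{-1}(Q_0)$ (a finite-index subgroup of $G_i$, hence of type $FP_{n+1}$, with $\ker(\pi_i|_{G_i'})=N_i$), and let $P'$ denote the fibre product of $\pi_1|_{G_1'}$ and $\pi_2|_{G_2'}$. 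Then $P'$ is the kernel of the natural surjection $P\twoheadrightarrow Q/Q_0$, hence has finite index in $P$, so it suffices to prove that $P'$ is of type $FP_{n+1}$. When $Q_0$ is trivial this is immediate from the K\"unneth formula, so from now on I assume $Q_0=\Z^r$ with $r\geq1$ and, to lighten notation, drop the primes.

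The homomorphism
$$
\phi\colon G_1\times G_2\twoheadrightarrow Q,\qquad \phi(g_1,g_2)=\pi_1(g_1)-\pi_2(g_2),
$$
is surjective with kernel $P$. Since $G_1\times G_2$ is of type $FP_{n+1}$ (by Proposition~\ref{shortexact}(a) applied to $G_2\hookrightarrow G_1\times G_2\twoheadrightarrow G_1$) and $Q$ is free abelian, Theorem~\ref{teo-subesf.grande.contida.em.sigma} reduces the problem to verifying
$$
S(G_1\times G_2,P)\subseteq\Sigma^{n+1}(G_1\times G_2,\Z).
$$
Because being of type $FP_m$ entails being of type $FP_{m'}$ for $m'\leq m$, I may shrink $k$ and $l$ so that $k+l=n$ with $0\leq k,l\leq n$.

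Fix $[\chi]\in S(G_1\times G_2,P)$. Since $\chi$ vanishes on $P=\ker\phi$, it factors as $\chi=\bar\chi\circ\phi$ for some nonzero $\bar\chi\in\mathrm{Hom}(Q,\R)$, and a direct computation yields
$$
\chi|_{G_1\times\mathbf{1}}=\bar\chi\circ\pi_1,\qquad \chi|_{\mathbf{1}\times G_2}=-\bar\chi\circ\pi_2.
$$
Both restrictions are nonzero (since $\pi_1,\pi_2$ are surjective) and vanish on $N_1$, $N_2$ respectively. Applying Theorem~\ref{teo-subesf.grande.contida.em.sigma} to each short exact sequence $N_i\hookrightarrow G_i\twoheadrightarrow Q$, which has abelian quotient (with the degenerate case $k=0$ or $l=0$ handled by the tautology $\Sigma^0(G_i,\Z)=S(G_i)$), yields $[\bar\chi\circ\pi_1]\in\Sigma^k(G_1,\Z)$ and $[-\bar\chi\circ\pi_2]\in\Sigma^l(G_2,\Z)$. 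Meinert's inequality (Theorem~\ref{teo-desig.meinert}), applied with the groups $G_1,G_2$ of type $FP_{n+1}$ and $k+l=n<n+1$, then produces $[\chi]\in\Sigma^{k+l+1}(G_1\times G_2,\Z)=\Sigma^{n+1}(G_1\times G_2,\Z)$, as required.

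The main obstacle is that the numerical hypothesis $k+l\geq n$ is sharp: the whole argument is forced through Meinert, which only lifts the $\Sigma^k(G_1)$-$\Sigma^l(G_2)$ data by exactly one step to $\Sigma^{k+l+1}(G_1\times G_2)$, and relaxing the hypothesis would require new information on $\Sigma$-invariants of direct products beyond Meinert; the known failure of the direct product conjecture for $\Sigma^n(G,\Z)$ makes this delicate. A secondary subtlety is the passage to abelian $Q$, which is indispensable for Theorem~\ref{teo-subesf.grande.contida.em.sigma} and is the precise place where the virtually abelian hypothesis on $Q$ is used.
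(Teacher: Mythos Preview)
Your proof is correct and follows essentially the same approach as the paper's: reduce to the case where $Q$ is abelian by passing to a finite-index subgroup, observe that $P$ is then normal in $G_1\times G_2$ with abelian quotient, and verify the $\Sigma$-criterion (Theorem~\ref{teo-subesf.grande.contida.em.sigma}) by applying it on each factor to get $[\chi|_{G_i}]\in\Sigma^{k}(G_i,\Z)$, $\Sigma^{l}(G_j,\Z)$ and then lifting via Meinert's inequality (Theorem~\ref{teo-desig.meinert}). The only differences are cosmetic: you perform the finite-index reduction first rather than last, you write down the explicit difference map $\phi$ (the paper simply notes that $P$ is normal with abelian quotient), and you spell out the degenerate cases $Q_0=0$ and $k$ or $l$ equal to $0$, which the paper leaves implicit.
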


\begin{proof} 

We assume first that $Q$  is abelian. Then $P \lhd (G_1 \times G_2)$  and  $(G_1 \times G_2)/P$ is abelian. By Theorem \ref{teo-subesf.grande.contida.em.sigma}, to complete the proof  we have to show  that for every  character $\chi: G_1 \times G_2 \rightarrow \R$ with $\chi(P) = 0$, we have $[\chi] \in \Sigma^{n+1}(G_1 \times G_2, \Z)$. Let $\chi$ be such a character. 
Observe that $G_1 \times G_2 = (G_1 \times \textbf{1})P$  and $G_1 \times G_2 = P(\textbf{1} \times G_2).$
Thus $\chi|_{(G_1 \times \textbf{1})} \neq 0$, otherwise $\chi: (G_1 \times \textbf{1})P \rightarrow \R$ would be the zero character. Similarly,  $\chi|_{(\textbf{1} \times G_2)} \neq 0$.

Since $Q$ is abelian, $G_1/N_1, G_2/N_2$ are abelian. Note that $N_1 \cup N_2 \subseteq P$ and $\chi(P) = 0$  imply that $\chi|_{(G_1 \times \textbf{1})}(N_1) = \textbf{0}$  and $\chi|_{(\textbf{1} \times G_2)}(N_2) = \textbf{0}$. Let $k', l' \geq 0$ be such that $k' \leq k$, $l' \leq l$  and $k'+l' = n$. Since $N_1$ is of type $FP_k$  and $N_2$  is of type $FP_l$, then $N_1$ is of type $FP_{k'}$  and $N_2$ is of type $FP_{l'}$. Thus, by Theorem \ref{teo-subesf.grande.contida.em.sigma}, $[\chi|_{(G_1 \times \textbf{1})}] \in \Sigma^{k'}(G_1, \Z)$  and $[\chi|_{(\textbf{1} \times G_2)}] \in \Sigma^{l'}(G_2, \Z)$. By Theorem \ref{teo-desig.meinert} $$[\chi] \in \Sigma^{k'+l'+1}(G_1 \times G_2, \Z) = \Sigma^{n+1}(G_1 \times G_2, \Z).$$

Now we consider the general case i.e.  there is a normal abelian subgroup $A$ of finite index in $Q$. 
Consider the short exact sequence of groups $N_1 \to \pi_1^{-1}(A) \stackrel{\pi_1}{\twoheadrightarrow} A$ and $N_2 \to \pi_2^{-1}(A) \stackrel{\pi_2}{\twoheadrightarrow} A$. 
Since $[G_1:\pi_1^{-1}(A)] = [Q:A] < \infty$  and $G_1$  is of type $FP_{n+1}$, it follows that $\pi_1^{-1}(A)$ is of type $FP_{n+1}$. Similarly $\pi_2^{-1}(A)$  is of type $FP_{n+1}$. By the abelian case discussed above, the fibre product  $\tilde{P}$ of $\tilde{\pi}_1$  and $\tilde{\pi}_2$  is of type $FP_{n+1}$. 
Note that $\tilde{P} = P \cap (\pi_1^{-1}(A) \times \pi_2^{-1}(A))$, hence $[P:\tilde{P}] < \infty$. Since going up or down with a finite index does not change the homological type  we deduce that $P$  is of type $FP_{n+1}$.
\end{proof}

We finish the section with the proof of the first part of Theorem F.

\begin{theorem} If $n \geq 2$ and the Homological $(n-1)$-$n$-$(n+1)$ Conjecture holds whenever $Q$ is virtually nilpotent then the Homological Virtual Surjection Theorem holds.
\end{theorem}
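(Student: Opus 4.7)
The plan is to adapt the topological proof of \cite[Thm.~3.10]{Benno} to the homological setting, inducting on $k$ with $n$ fixed. After replacing $P$ by a finite-index subgroup (which does not affect being of type $FP_n$), I assume $P$ is subdirect. For the base case $k = n$, virtual surjection on the unique $n$-tuple $(G_1, \ldots, G_n)$ forces $P$ to have finite index in $G_1 \times \ldots \times G_n$; iterating Proposition \ref{shortexact}(a) on the split extensions $G_1 \times \ldots \times G_{i-1} \hookrightarrow G_1 \times \ldots \times G_i \twoheadrightarrow G_i$ shows the direct product is of type $FP_n$, hence so is $P$.

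For the inductive step $k > n$, let $H$ be the image of $P$ under the projection to $G_1 \times \ldots \times G_{k-1}$. Then $H$ is a subdirect product of $k-1 \geq n$ groups of type $FP_n$ that virtually surjects on every $n$-tuple of its factors, so by induction $H$ is of type $FP_n$. Let $L$ be the image of $P$ in $G_k$; since $P$ virtually surjects on the single factor $G_k$, $L$ has finite index in $G_k$ and hence is of type $FP_n$. Setting
\[ \mathcal{M} = P \cap (G_1 \times \ldots \times G_{k-1} \times \{1\}), \quad \mathcal{N} = P \cap (\{1\}^{k-1} \times G_k), \]
both normal in $P$ with $\mathcal{M} \cap \mathcal{N} = 1$, the canonical projections exhibit $P$ as the fibre product of the short exact sequences
\[ \mathcal{M} \hookrightarrow H \twoheadrightarrow Q \quad \text{and} \quad \mathcal{N} \hookrightarrow L \twoheadrightarrow Q, \]
where $Q := P/(\mathcal{M}\mathcal{N}) \cong H/\mathcal{M} \cong L/\mathcal{N}$.

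To close the induction, I will apply the assumed Homological $(n-1)$-$n$-$(n+1)$ Conjecture with virtually nilpotent $Q$ to this fibre product, which requires verifying: (a) $H, L$ of type $FP_n$ (already done), (b) $\mathcal{M}$ of type $FP_{n-1}$, and (c) $Q$ of type $FP_{n+1}$ and virtually nilpotent. For (b), $\mathcal{M}$ is a subdirect-style subgroup of $G_1 \times \ldots \times G_{k-1}$ that, by virtue of $P$'s virtual surjection on every $n$-tuple containing the index $k$, inherits virtual surjection on every $(n-1)$-tuple of its factors; a parallel induction on $n$ (with base $n=1$ reducing to finite generation) yields $\mathcal{M}$ of type $FP_{n-1}$. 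The main obstacle is (c), where the full strength of the virtual-surjection-on-$n$-tuples hypothesis is needed: for each $(n-1)$-subset $I \subseteq \{1, \ldots, k-1\}$, the (finite-index) image of $P$ in $\prod_{i \in I} G_i \times G_k$ intersected with $G_k$ produces a finite-index subgroup of $G_k$ containing $\mathcal{N}$, and taking the intersection over the finitely many such $I$, together with a commutator analysis exploiting subdirectness in the remaining coordinates, forces $Q = L/\mathcal{N}$ to be virtually nilpotent (hence automatically of type $FP_\infty$, a fortiori $FP_{n+1}$). Once (a)--(c) are established, the assumed Conjecture delivers $P$ of type $FP_n$, completing the induction.
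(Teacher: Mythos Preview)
Your proposal follows essentially the same route as the paper: induct on $k$, pass to a subdirect subgroup of finite index, realise $P$ as the fibre product of $\mathcal{M}\hookrightarrow H\twoheadrightarrow Q$ and $\mathcal{N}\hookrightarrow G_k\twoheadrightarrow Q$ (your $L$ equals $G_k$ once $P$ is subdirect), verify $\mathcal{M}$ is $FP_{n-1}$ via virtual surjection on $(n-1)$-tuples, and then invoke the assumed Homological $(n-1)$-$n$-$(n+1)$ Conjecture for virtually nilpotent $Q$.

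The one place where the paper is cleaner is your item~(c). You propose an ad hoc argument involving $(n-1)$-subsets and a ``commutator analysis,'' but this is unnecessary and somewhat garbled. Since $n\geq 2$, virtual surjection on $n$-tuples immediately gives virtual surjection on \emph{pairs} (extend any pair to an $n$-tuple and project back). The paper then simply quotes \cite[Prop.~3.2]{B-H-M-S.1} (equivalently \cite[Lemma~3.2]{Benno}): for a subdirect product virtually surjecting on pairs, every quotient $G_i/(P\cap G_i)$ is virtually nilpotent. That gives $Q\cong G_k/\mathcal{N}$ virtually nilpotent in one line, and being finitely generated virtually nilpotent it is of type $FP_\infty$. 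Your sketch gestures at the commutator mechanism behind that lemma, but there is no need to reprove it.
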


\begin{proof} The proof is similar to the proof of \cite[Thm.~3.10]{Benno}, where instead of \cite[Prop.~4.3]{Benno} we apply Theorem \ref{teo-kuckuck.general} and we swap the numbers $n$ and $k$ in the proof of \cite[Claim, Thm.~3.10]{Benno}. We sketch the proof. Let $P \subseteq G_1 \times \ldots \times G_k$ be as in the statement of the 
Homological Virtual Surjection Theorem. Then by substituting each $G_i$ with a subgroup of finite index if necessary, we can assume that $P \subseteq G_1 \times \ldots \times G_k$ is a subdirect product i.e. $p_i(P) = G_i$ for every $1 \leq i \leq k$. By \cite[Prop.~3.2]{B-H-M-S.1} or \cite[Lemma~3.2]{Benno} we obtain  that $G_i/ (P \cap G_i)$ is virtually nilpotent for every $i$. 

Let $T= p_{1,\ldots, k-1} (P)$, where $p_{1,\ldots, k-1} : G_1 \times \ldots \times G_k \to G_1 \times \ldots \times G_{k-1}$ is the canonical projection  and $N_{1, \ldots, k-1} = P \cap T$.
As in the proof of \cite[Thm.~3.10]{Benno} the fact that $P$  virtually surjects on $n$ factors implies that $N_{1, \ldots, k-1} \subseteq G_1 \times \ldots \times G_{k-1} $ virtually surjects on $n-1$ factors. If the Homological Virtual Surjection Theorem  holds for smaller values of $k$, i.e. we use induction on $k$, then  $N_{1,\ldots, k-1}$ is of type $FP_{n-1}$. Furthermore $T \subseteq G_1 \times \ldots \times G_{k-1}$ virtually surjects on $n$-tupples if $n \leq k-1$, so by induction on $k$  the group $T$ is of type $FP_n$. If $n \geq k$ we get that $n = k$ and in this case the Homological Virtual Surjection Theorem obviously holds.

As in the proof of \cite[Thm.~3.10]{Benno} and using \cite[Lemma~2.3]{Benno} for the subdirect product $P \subseteq T \times G_k$, we deduce that  $P$ is the fibre product associated to the short exact sequences $N_{1,\ldots, k-1} \to T \to Q$ and $N_k \to G_k \to Q$, where   $N_k = P \cap G_k$. Thus $Q \simeq G_k / N_k $ is virtually nilpotent.  
Then we can apply   the Homological $(n-1)$-$n$-$(n+1)$ Conjecture since $Q$  is virtually  nilpotent and obtain that $P$ is of type $FP_{n}$. 
\end{proof}

\section{The Homological $1$-$2$-$3$ Conjecture for finitely presented $Q$}

In this section we prove Theorem E.

\begin{theorem} \label{homFinitePres} The Homological $1$-$2$-$3$ Conjecture holds if in addition $Q$ is finitely presented.
\end{theorem}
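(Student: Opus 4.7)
The plan is to reduce to the case where $G_2$ is a finitely generated free group via the second part of Corollary~\ref{Q-fin-pres}, and then to apply Theorem~\ref{teo-kuckuck.general} to the short exact sequence $N_1 \hookrightarrow P \twoheadrightarrow F$ with an auxiliary group $B_0$ built from a finite presentation of $Q$. By Corollary~\ref{Q-fin-pres} it suffices to prove the Homological $1$-$2$-$3$ Conjecture under the additional hypothesis that $G_2 = F$ is a finitely generated free group. Under this reduction, the freeness of $F$ lets one lift a free basis of $F$ through $\pi_1$ and obtain a homomorphism $\sigma : F \to G_1$ with $\pi_1 \sigma = \pi_2$, and hence a section $s : F \to P$, $f \mapsto (\sigma(f),f)$, of the projection $p_2 : P \twoheadrightarrow F$; in particular $P \cong N_1 \rtimes F$, and Proposition~\ref{shortexact}(a) already gives that $P$ is of type $FP_1$.

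To upgrade $P$ to type $FP_2$, I apply Theorem~\ref{teo-kuckuck.general} with $n = 1$, $A = N_1$, $B = P$, $C = F$. The hypotheses on $A$ and $C$ hold, so it remains to produce an auxiliary short exact sequence $N_1 \hookrightarrow B_0 \twoheadrightarrow C_0$ with $B_0$ of type $FP_2$ together with a homomorphism $\theta : B_0 \to P$ restricting to the identity on $N_1$. The finite presentability of $Q$ enters the construction of $B_0$: fixing a finite presentation $Q = \langle x_1,\ldots,x_k \mid r_1,\ldots,r_m\rangle$, and by Corollary~\ref{Q-fin-pres} taking $F$ free on $x_1,\ldots,x_k$, one assembles $B_0$ from $G_1$, $F$ and $\sigma$ using the finite data $r_1,\ldots,r_m$ so that $B_0$ is a finitely presented extension admitting the required homomorphism to $P$.

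The main obstacle is the concrete construction of $B_0$ and $\theta$. The naive choice $B_0 = G_1$, $\theta(g) = (g,\nu(g))$, would demand a homomorphism $\nu : G_1 \to F$ with $\pi_2 \nu = \pi_1$, which is equivalent to pulling back a section of $\pi_2 : F \twoheadrightarrow Q$ to $G_1$ --- and such a section does not exist in general. The finite relators $r_1,\ldots,r_m$ are the key ingredient to bypass this obstruction, for instance by replacing $G_1$ with a finitely presented extension of $Q$ by $N_1$ that encodes the relators and admits a compatible map to $P$. Once suitable $B_0$ and $\theta$ are exhibited, Theorem~\ref{teo-kuckuck.general} yields directly that $P$ is of type $FP_2$, as required.
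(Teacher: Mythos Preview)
Your reduction to the case $G_2=F$ free via Corollary~\ref{Q-fin-pres} is correct and is exactly how the paper begins. The rest of your plan, however, has a real gap and diverges from the paper's argument.

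The gap is the construction of $B_0$. You propose to apply Theorem~\ref{teo-kuckuck.general} to $N_1\hookrightarrow P\twoheadrightarrow F$ and to manufacture $B_0$ as ``a finitely presented extension of $Q$ by $N_1$'' admitting a homomorphism $\theta:B_0\to P$ with $\theta|_{N_1}=id$. But if $C_0=Q$ then the induced map $\nu:Q\to F$ is a genuine group homomorphism from $Q$ to a free group; when $Q$ has no nontrivial such homomorphism (for example $Q$ finite, or with perfect abelianisation) $\nu$ is trivial, forcing $\theta(B_0)\subseteq N_1\times 1$ and hence giving a retraction $B_0\to N_1$. That collapses the construction and certainly does not produce a useful $FP_2$ group. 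More generally, there is no mechanism offered that produces, from the finite presentation of $Q$ and the $FP_2$ property of $G_1$, an $FP_2$ group containing $N_1$ itself as a normal subgroup together with the required map to $P$. This is exactly the step where the argument lives, and it is missing.

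The paper does \emph{not} use Theorem~\ref{teo-kuckuck.general} here. Instead it replaces $G_1$ by a finitely presented approximation $\widetilde G_1$ obtained by keeping only finitely many of the ``internal'' relations of $N_1$; the $FP_2$ hypothesis on $G_1$ guarantees that the kernel $S=\ker(\widetilde G_1\to G_1)$ is \emph{perfect}, i.e.\ $S=[S,S]$. Crucially the normal subgroup of $\widetilde G_1$ is an enlarged $\widetilde A_1$ surjecting onto $N_1$, not $N_1$ itself. Since $Q$ is finitely presented and $FP_3$ it is $F_3$, so the classical (homotopical) $1$-$2$-$3$ Theorem applies to $\widetilde A_1\hookrightarrow\widetilde G_1\twoheadrightarrow Q$ and $B\hookrightarrow F\twoheadrightarrow Q$, giving that the auxiliary fibre product $\widetilde P_1$ is finitely presented. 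The surjection $\widetilde P_1\twoheadrightarrow P$ again has kernel $S$, and perfection of $S$ forces the relation module of $P$ to be a quotient of that of $\widetilde P_1$; hence it is finitely generated and $P$ is $FP_2$. The two key ideas you are missing are: (i) enlarge the kernel rather than insist on keeping $N_1$, and (ii) use the perfect-kernel/relation-module transfer instead of Theorem~\ref{teo-kuckuck.general}.
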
 

\begin{proof} Let
 $A \hookrightarrow G_1 \stackrel{\pi_1}{\twoheadrightarrow} Q$ and  $B \hookrightarrow G_2 \stackrel{\pi_2}{\twoheadrightarrow} Q$   be short exacts sequences of groups with $A$ finitely generated, $G_1$ and $G_2$ of type $FP_2$  and $Q$ is $FP_3$ and finitely presented. Denote by $P$ the associated  fibre product. We aim to show that $P$ is of type $FP_2$.
By Corollary \ref{Q-fin-pres} we can assume that $G_2$ is a free group $F$ with a finite free basis $X$. Let $$\langle X \mid \widetilde{R} \rangle, \hbox{ where } \widetilde{R}= \{ r_i(\underline{x}) \}_{i \in I_0},$$
be a finite presentation of $Q$.
Then there is  a presentation of the group $G_1$ 
$$
G_1 = \langle X \cup A_0 \mid   R_1 \cup R_2 \cup R_3 \rangle,
$$
where  $A_0 = \{ a_1, a_2, \ldots, a_k \}$ is a finite set of generators of $A$ and
 $$ R_1 = \{ r_i(\underline{x}) w_i (\underline{a})^{-1} \}_{i \in I_0}, R_2 = \{ a_j^x  v_{j,x}(\underline{a})^{-1} \}_{1 \leq j \leq k, x \in X \cup X^{-1}} \hbox{ and } R_3 = \{ z_j(\underline{a}) \} _{j \in J}$$
for some possibly infinite index set  $J$  and $ w_i (\underline{a}) $ ,$ v_{j,x}(\underline{a})$, $  z_j(\underline{a})$ are elements of the free group with a free basis $A_0$. 

Let $R$ be the normal subgroup of the free group $F(X\cup A_0)$ with a free basis $X \cup A_0$ generated as a normal subgroup by $R_1 \cup R_2 \cup R_3$. Then there is a short exact sequence of groups
$$R\mono F(X\cup A_0)\epi G_1.$$
 Since $G_1$ is $FP_2$ we obtain that $R/[R,R]$ is finitely generated as $\Z G_1$-module via conjugation. Hence
  there is a finite subset $J_0$ of $J$ such that 
\begin{equation} \label{FP_2condition}
R = R_0 [R,R],
\end{equation}
where $R_0$ is the normal closure of $R_1 \cup R_2 \cup R_{3,0}$ in the free group $F(X \cup A_0)$ and $R_{3,0} = \{ z_j(\underline{a}) \} _{j \in J_0} \subseteq R_3$.

Consider the group $\widetilde{G}_1 = F(X \cup A_0) / R_0$. Then there is a natural projection
$$
\pi : \widetilde{G}_1 = F(X \cup A_0)/ R_0 \to G_1 = F(X \cup A_0) / R
$$ with kernel $S = R / R_0$. By (\ref{FP_2condition}) we have
\begin{equation} \label{kernelFP2}
S = [S,S].
\end{equation}
Since $R_1 \cup R_2$ are relations in $\widetilde{G}_1$ we deduce that the subgroup $\widetilde{A}_1$ of $\widetilde{G}_1$ generated by the elements of $A_0$ is a normal subgroup of $\widetilde{G}_1$ such that $\widetilde{G}_1 / \widetilde{A}_1 \simeq Q$. Thus there is a short exact sequence of groups
\begin{equation}\label{tildeA1}
\widetilde{A}_1 \rightarrowtail \widetilde{G}_1 \buildrel{\widetilde{\pi}_1}\over\epi Q
\end{equation}
with both $\widetilde{G}_1$ and $Q$ finitely presented.
Denote by $\widetilde{P}_1$ the fibre of the short exact sequences (\ref{tildeA1}) and 
$B \rightarrowtail F \buildrel{\pi_2}\over\epi Q,$ thus
 $$\widetilde{P}_1=\{(h_1,h_2)\in \widetilde{G}_1\times F\mid \widetilde{\pi}_1(h_1)=\pi_2(h_2)\}.$$
 Since $Q$ is $FP_3$ and is finitely presented, it is $F_3$. Then  the 1-2-3 Theorem from \cite{B-H-M-S.1}  implies that $\widetilde{P}_1$ is finitely presented. 

Recall that $P$ is the fibre of the original short exact sequences $A \rightarrowtail G_1 \buildrel{\pi_1}\over\epi Q$
and $B \rightarrowtail F \buildrel{\pi_2}\over\epi Q$, i.e.
$$P=\{(h_1,h_2)\in G_1\times F\mid\pi_1(h_1)=\pi_2(h_2)\}.$$
The map $\pi \times id_F : \widetilde{G}_1 \times F \to G_1 \times F$ induces an epimorphism
$$\mu:\widetilde{P}_1\epi P$$
with kernel $ker(\mu)=S,$ where $S = ker (\pi)$.
 Then $\widetilde{P}_1=F(X \cup A_0) / S_1$ for some normal subgroup  $S_1$ of $F(X \cup A_0)$ and  $P = F(X \cup A_0) / S_2$ for some normal subgroup $S_2$  of $F(X \cup A_0)$ such that  $S_1 \subseteq S_2$ and $S_2/ S_1 = S = [S,S]$. Hence
$$
S_2 = S_1 [S_2, S_2],
$$
so $S_2/ [S_2, S_2]$ is an epimorphic image of the abelianization $S_1/ [S_1, S_1]$.
Since $\widetilde{P}_1$ is finitely presented, it is of type $FP_2$. Then we deduce that $S_1/ [S_1, S_1]$ is finitely generated as $\Z \widetilde{P}_1$-module via conjugation. Thus its epimorphic image $S_2/ [S_2, S_2]$ is finitely generated as $\Z P$-module via conjugation, hence $P$ is of type $FP_2$ as claimed.
\end{proof}

{\bf Acknowledgements} The first named author was partially supported by ''bolsa de produtividade em pesquisa'' 303350/2013-0 from CNPq, Brazil
 and grant 2016/05678-3 from FAPESP, Brazil. The second named author was supported by PhD grant from CAPES/CNPq, Brazil.

\end{document}